\documentclass[12pt, one side,emlines]{amsart}
\usepackage[a4paper, margin=1in]{geometry}
\usepackage{amssymb,latexsym,xy,eucal,mathrsfs}
\textwidth=18 cm \textheight=27cm \theoremstyle{plain}
\setlength{\textheight}{25cm} \setlength{\textwidth}{16.5cm}
\usepackage{lineno}
\usepackage{lipsum}
\newtheorem{theorem}{Theorem}[section]

\newtheorem{lemma}[theorem]{Lemma}

\newtheorem{definition}[theorem]{Definition}

\flushbottom
\voffset -5truept
\hoffset -5truept

\linespread{0.5}

\begin{document}
\title{On graphic splitting of regular matroids}
\author{  Ganesh Mundhe$^1$ and   K. V. Dalvi$^2$ }
\address{ 1. Army Institute of Technology, Pune-411015, INDIA }
\address{ 2. Goverment College of Engineering, Pune-411005, INDIA}
\email{1. ganumundhe@gmail.com; 2. kvd.maths@coep.ac.in}
\maketitle
\baselineskip16truept
\begin{abstract} Raghunathan at al. \cite{raghunathan1998splitting} introduced splitting operation with respect to a pair of element for binary matroid and characterized Eulerian binary matroids using it. 
In general, the splitting operation does not preserve the graphicness property of the given matroids.  Shikare and Waphare \cite{shikare2010excluded} obtained the characterization for the class of graphic matroids which yield graphic matroids under the splitting operation with respect to a pair of elements. We study the effect of the splitting operation on regular matroids and characterize the class of regular matroids which yield graphic matroids under the splitting operation. We also provide an alternate and short proof to two of the known results. 
\end{abstract}

 \noindent \textbf{Keywords:} Binary matroid, splitting, forbidden-minor, graphic, regular  
\vskip.2cm \noindent
{\bf Mathematics Subject Classification (2010): 05B35; 05C50; 05C83}

 \section{Introduction} 
For undefined notions and terminology, we refer to Oxley \cite{ox}. Fleischner \cite{fleischner1990eulerian} introduced the splitting operation with respect to a pair of edges of graphs. Using this operation, he characterized  Eulerian graphs and gave an algorithm to find all Eulerian trails in an Eulerian graph. 
Fleischner \cite{fleischner1990eulerian} defined the splitting operation  as follows.
\begin{definition} \cite{fleischner1990eulerian} \label{swt2g}
	Let $G$ be a connected graph and let $v$ be a vertex
	of degree at least three in $G$. If $x=vv_1$ and $y=vv_2$ are two
	edges incident at $v$, then splitting away the pair $\{x,y\}$ from $v$
	results in a new graph $G_{x,y}$ obtained from $G$ by deleting the
	edges $x,y$ and adding a new vertex $v_{x,y}$ adjacent to $v_1$
	and $v_2$. The transition from $G$ to $G_{x,y}$ is called the
	splitting operation on the graph $G$ with respect to $x$ and $y$.
\end{definition}

As an extension of the splitting operation to binary matroids,  Raghunathan, Shikare and Waphare \cite{raghunathan1998splitting} defined the splitting operation for binary matroids with respect to a pair of elements as follows.  

\begin{definition}\label{cds11} \cite{raghunathan1998splitting}  Let $M$ be a binary matroid with  standard matrix representation $A$ over the field $GF(2)$ and let $\{x,y\} \subset E(M).$  Let $A_{x,y}$ be the matrix  obtained by adjoining one extra row to the matrix $A$ whose entries are 1 in the columns labeled by the elements $x$ and $y$ and zero otherwise.  The vector matroid of the matrix $A_{x,y},$ denoted by $M_{x,y},$  is called as the splitting matroid of  $M$ with respect to $x$ and $y$, and the  transition from $M$ to $M_{x,y}$ is called as the {\it splitting operation} with respect to $x$ and $y$.
	
\end{definition}

In general, the splitting operation may not preserve some properties of the given matroid; see \cite{bm,  borse2012connected,  borse2014excluded, borse2015characterization, shikare2010excluded}.  It is an interesting to check that when the graphic matroids remain graphic under the splitting operation.  Some research in this direction is already came into lights.    In \cite{shikare2010excluded}, forbidden-minor characterization for the class of graphic matroids whose splitting matroids are graphic is obtained. In fact, they proved the following result about the graphic splitting of graphic matroids.

 \begin{theorem}\cite{shikare2010excluded} \label{sw} The splitting operation, by any pair of elements, on a graphic matroid $M$ is graphic if and only if $M$ has no minor isomorphic to any of the circuit matroids $M(G_1), M(G_2), M(G_3)$ and $M(G_4),$ where $G_1, G_2, G_3$ and $ G_4$ are the graphs as shown in Figure 1. 
 \end{theorem}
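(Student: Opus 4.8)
The statement is a forbidden-minor characterization, so the plan is to prove the two implications separately, both resting on a single compatibility lemma relating the splitting operation to the operations of deletion and contraction. Concretely, I would first show that for $\{x,y\}\subseteq E(M)$ and any $e\in E(M)\setminus\{x,y\}$ one has $(M\setminus e)_{x,y}=M_{x,y}\setminus e$, together with a companion statement placing $(M/e)_{x,y}$ among the minors of $M_{x,y}$. Iterating these identities yields the key fact: if $N$ is a minor of $M$ with $\{x,y\}\subseteq E(N)$, then $N_{x,y}$ is a minor of $M_{x,y}$. Since the class of graphic matroids is minor-closed, this lemma lets me transport non-graphicness from a small forbidden configuration upward to $M_{x,y}$, and it is the engine behind both directions.

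For the \textbf{necessity} direction I would argue contrapositively. Suppose $M$ has a minor isomorphic to one of $M(G_1),M(G_2),M(G_3),M(G_4)$. For each $G_i$ I would name an explicit pair of edges $\{x_i,y_i\}$ and verify, by a direct computation on the representation matrix, that $(M(G_i))_{x_i,y_i}$ contains a minor isomorphic to one of the Tutte excluded minors for graphicness, namely $F_7$, $F_7^*$, $M^*(K_5)$ or $M^*(K_{3,3})$; thus each $(M(G_i))_{x_i,y_i}$ is non-graphic. By the compatibility lemma this non-graphic matroid survives as a minor of $M_{x_i,y_i}$, so $M$ admits a non-graphic splitting. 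This reduces necessity to four finite base checks.

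The \textbf{sufficiency} direction is where I expect the real work, and I would again proceed by contraposition: assuming $M_{x,y}$ is non-graphic for some pair $\{x,y\}$, I must exhibit one of the four forbidden minors inside $M$. As $M$ is graphic it is binary, and $M_{x,y}$ stays binary, so the excluded-minor characterization of graphic matroids supplies a minor of $M_{x,y}$ isomorphic to $F_7$, $F_7^*$, $M^*(K_5)$ or $M^*(K_{3,3})$. I would then pull this obstruction back across the splitting. It is convenient to realize the operation as a single-element coextension: setting $M^{+}$ to be the binary matroid whose representation is $A_{x,y}$ with one further column $e_0$ carrying a single $1$ in the adjoined row, one checks $M^{+}\setminus e_0=M_{x,y}$ and $M^{+}/e_0=M$. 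The excluded minor then lives in $M^{+}\setminus e_0$; performing the matching deletions and contractions in $M^{+}$ and finally contracting $e_0$ pushes a bounded-size graphic minor into $M$, which a case analysis matches against $M(G_1),\dots,M(G_4)$.

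The hard part is precisely this localization. The obstruction is that the deletions and contractions realizing the excluded minor inside $M_{x,y}$ may involve elements whose columns meet the adjoined row, so they do not descend to $M$ term by term, and the interaction with $e_0$ must be tracked carefully. The payoff of controlling it is a bound on how many elements can be ``active'' around $\{x,y\}$, which is exactly what collapses the analysis to finitely many configurations. I would organize this by first reducing to the case where $M$ is $3$-connected, since a forbidden minor found in a $3$-connected piece lifts back through the lower-connectivity structure; in the $3$-connected case the hypothesis of avoiding all four $G_i$ is rigid enough that these four graphs emerge as the only minimal obstructions.
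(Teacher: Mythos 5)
First, note that the paper you are being compared against does not prove this statement at all: Theorem \ref{sw} is quoted from Shikare and Waphare \cite{shikare2010excluded} and used as a black box, so there is no internal proof to measure your attempt against. Judged on its own merits, your outline assembles the correct standard machinery: the commutation $(M\backslash T_1/T_2)_{x,y}=M_{x,y}\backslash T_1/T_2$ for $T_1,T_2$ disjoint from $\{x,y\}$ (the same identity this paper uses in Lemmas \ref{c4triv} and \ref{c2pairgtocg}), the coextension element $e_0$ with $M^{+}\backslash e_0=M_{x,y}$ and $M^{+}/e_0=M$, and Tutte's excluded-minor characterization of graphic matroids. The necessity direction is sound modulo the four finite matrix verifications, and this is indeed how such results are proved.

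The sufficiency direction, however, is where the entire content of the theorem lives, and it is not proved. Two concrete gaps. First, the sentence asserting that the deletions and contractions realizing the excluded minor in $M_{x,y}$ ``push a bounded-size graphic minor into $M$, which a case analysis matches against $M(G_1),\dots,M(G_4)$'' is essentially the theorem restated: no bound is derived, the case analysis is not performed, and it is exactly here that one must show these four graphs (and not some other or longer list) exhaust the minimal obstructions. The interaction you flag yourself --- deletions and contractions whose columns meet the adjoined row --- is the crux, and it is left unresolved. Second, the proposed reduction to the $3$-connected case is unjustified for the splitting operation: if $M$ has a $2$-separation and the pair $\{x,y\}$ straddles it, the added row of $A_{x,y}$ couples the two sides, so a forbidden configuration need not localize in a single $3$-connected piece; moreover $G_1$ and $G_2$ contain parallel edges, so the minimal obstructions are themselves not $3$-connected, which means the claim that ``these four graphs emerge'' in the $3$-connected case cannot be the whole argument. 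As written, the proposal is a plausible strategy document rather than a proof of the theorem.
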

 
  \begin{center}
\unitlength 1mm 
\linethickness{0.4pt}
\ifx\plotpoint\undefined\newsavebox{\plotpoint}\fi 
\begin{picture}(138,44.87)(0,0)
\put(.81,19.37){\circle*{1.33}}
\put(15.81,19.37){\circle*{1.33}}
\put(15.81,32.37){\circle*{1.33}}
\put(25.81,32.37){\circle*{1.33}}
\put(25.81,19.37){\circle*{1.33}}
\put(41.81,19.37){\circle*{1.33}}
\put(41.81,32.37){\circle*{1.33}}
\put(33.81,43.37){\circle*{1.33}}
\put(51.14,32.37){\circle*{1.33}}
\put(67.48,32.37){\circle*{1.33}}
\put(67.48,19.37){\circle*{1.33}}
\put(51.14,19.37){\circle*{1.33}}
\put(59.14,43.37){\circle*{1.33}}
\put(77.48,32.37){\circle*{1.33}}
\put(77.48,19.37){\circle*{1.33}}
\put(94.81,32.37){\circle*{1.33}}
\put(94.81,19.37){\circle*{1.33}}
\put(86.14,43.37){\circle*{1.33}}
\put(77.48,26.04){\circle*{1.33}}
\put(94.81,26.04){\circle*{1.33}}
\put(15.81,32.37){\line(0,-1){13}}
\put(15.81,19.37){\line(-1,0){15}}
\put(25.81,32.37){\line(1,0){16}}
\put(41.81,32.37){\line(0,-1){13}}
\put(41.81,19.37){\line(-1,0){16}}
\put(25.81,19.37){\line(0,1){13}}
\put(25.81,32.37){\line(3,4){8.33}}
\put(33.81,43.37){\line(3,-4){8.33}}
\put(51.14,32.37){\line(1,0){16.33}}
\put(67.48,32.37){\line(0,-1){13}}
\put(67.48,19.37){\line(-1,0){16.33}}
\put(51.14,19.37){\line(0,1){13}}
\put(51.14,32.37){\line(3,4){8.33}}
\put(59.14,43.37){\line(3,-4){8.33}}
\put(67.48,32.37){\line(-5,-4){16.33}}
\put(67.48,19.37){\line(-5,4){16.33}}
\put(59.14,43.37){\line(-1,-3){8}}
\put(59.14,43.37){\line(1,-3){8}}
\put(33.81,43.37){\line(-1,-3){8}}
\put(77.48,32.37){\line(4,5){8.67}}
\put(86.14,43.37){\line(4,-5){8.67}}
\put(94.81,32.37){\line(0,-1){13}}
\put(94.81,19.37){\line(-1,0){17.33}}
\put(77.48,19.37){\line(0,1){13}}
\put(77.48,32.37){\line(4,-3){17.33}}
\put(86.14,43.37){\line(-1,-3){8}}
\put(77.48,26.04){\line(3,1){17.33}}
\put(94.81,26.04){\line(-1,0){17.33}}
\qbezier(.81,19.37)(1.31,20)(.81,19.12)
\qbezier(.81,19.12)(.69,20.12)(1.06,19.12)
\qbezier(1.06,19.12)(.19,20.37)(.81,19.62)
\put(.31,31.87){\circle*{1.12}}
\put(.31,32.37){\line(1,0){16}}
\multiput(16.31,32.37)(-.09375,.03125){8}{\line(-1,0){.09375}}
\put(15.56,32.62){\line(1,0){.25}}
\multiput(15.56,19.62)(-.041005291,.0337301587){378}{\line(-1,0){.041005291}}
\put(.31,32.37){\line(0,-1){12.75}}
\qbezier(.56,31.87)(8.06,44.87)(15.56,32.87)
\multiput(15.56,32.62)(-.0388601036,-.0336787565){386}{\line(-1,0){.0388601036}}
\qbezier(.56,19.62)(7.06,7.25)(15.56,19.37)
\qbezier(25.81,19.62)(32.31,7.62)(41.81,19.62)
\put(7.31,15.55){\makebox(0,0)[cc]{}}
\put(32.81,15.55){\makebox(0,0)[cc]{}}
\put(43.31,24.12){\makebox(0,0)[cc]{}}
\put(63.56,39.37){\makebox(0,0)[cc]{}}
\put(55.31,40.37){\makebox(0,0)[cc]{}}
\put(56.06,36.12){\makebox(0,0)[cc]{}}
\put(60.56,36.12){\makebox(0,0)[cc]{}}
\put(88.31,22.87){\makebox(0,0)[cc]{}}
\put(85.31,16.87){\makebox(0,0)[cc]{}}
\put(75.81,22.12){\makebox(0,0)[cc]{}}
\put(7.75,40.12){\makebox(0,0)[cc]{}}
\put(60.5,.75){\makebox(0,0)[cc]{Figure 1}}
\put(9.25,8.25){\makebox(0,0)[cc]{$G_1$}}
\put(35,8.25){\makebox(0,0)[cc]{$G_2$}}
\put(58.75,8.25){\makebox(0,0)[cc]{$G_3$}}
\put(85.5,8.25){\makebox(0,0)[cc]{$G_4$}}
\put(101.915,25.62){\circle*{1.33}}
\put(110.495,32.37){\circle*{1.33}}
\put(126.835,32.37){\circle*{1.33}}
\put(137.335,25.12){\circle*{1.33}}
\put(126.835,19.37){\circle*{1.33}}
\put(110.495,19.37){\circle*{1.33}}
\put(110.495,32.37){\line(1,0){16.33}}
\put(126.835,32.37){\line(0,-1){13}}
\put(126.835,19.37){\line(-1,0){16.33}}
\put(126.835,32.37){\line(-5,-4){16.33}}
\put(126.835,19.37){\line(-5,4){16.33}}
\multiput(101.355,25.75)(4.5,-.03125){8}{\line(1,0){4.5}}
\multiput(126.605,32.25)(.05,-.03372093){215}{\line(1,0){.05}}
\multiput(137.355,25)(-.061403509,-.033625731){171}{\line(-1,0){.061403509}}
\multiput(110.605,32.75)(-.043269231,-.033653846){208}{\line(-1,0){.043269231}}
\multiput(101.355,25.75)(.047927461,-.033678756){193}{\line(1,0){.047927461}}
\put(119.105,14.5){\makebox(0,0)[cc]{$G_5$}}
\end{picture}

  \end{center}

We observe that $M(G_1)$ is a minor of $M(G_4)$. Therefore there are only three forbidden-minors $M(G_1)$, $M(G_2)$ and $M(G_3)$ for the class of graphic matroids which yield graphic matroids under the splitting operation.

Borse et al. \cite{borse2014excluded}  obtained the following  characterization for the class of cographic matroids which give cographic matroids. 
 \begin{theorem} \cite{borse2014excluded} \label{c2bsd} The splitting operation, by any pair of elements, on a cographic matroid $M$ is cographic if and only if $M$ has no minor isomorphic to any of the circuit matroids $M(G_1)$ and $ M(G_2),$  where $G_1$ and $G_2$ are the graphs as shown in Figure 1.
 \end{theorem}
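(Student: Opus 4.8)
The plan is to reduce the statement to a check against the excluded minors of the class of cographic matroids, exploiting the fact that the splitting operation is easy to describe at the level of the cocircuit space. Concretely, by Definition~\ref{cds11} the matrix $A_{x,y}$ is obtained from $A$ by appending the row $\chi_{\{x,y\}}$ (the incidence vector of $\{x,y\}$); hence the cocircuit space of $M_{x,y}$ is the cocircuit space of $M$ together with $\chi_{\{x,y\}}$, while $E(M_{x,y})=E(M)$ and $r(M_{x,y})=r(M)+1$. Two consequences will drive the proof. First, $M_{x,y}$ is again binary, so it can never contain a $U_{2,4}$-minor. Second, dualizing, $(M_{x,y})^*$ is the binary matroid whose cycle space is $\mathrm{cycle}(M^*)+\langle \chi_{\{x,y\}}\rangle$, so passing to duals turns splitting into the operation of adjoining the single circuit $\{x,y\}$ to $M^*$.

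Since the excluded minors for the class of cographic matroids are $U_{2,4}, F_7, F_7^*, M(K_5)$ and $M(K_{3,3})$, and $M_{x,y}$ is automatically binary, $M_{x,y}$ is cographic if and only if it has no minor isomorphic to one of $F_7, F_7^*, M(K_5), M(K_{3,3})$. Note also that a cographic $M$, being regular, already has none of $F_7, F_7^*$ as a minor; thus the entire difficulty is that raising the rank by one might \emph{create} one of these four obstructions. This reformulates Theorem~\ref{c2bsd} as: classify the minor-minimal cographic matroids admitting a pair $\{x,y\}$ for which $M_{x,y}$ acquires a minor in $\{F_7, F_7^*, M(K_5), M(K_{3,3})\}$.

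First I would record the standard commutation lemmas for splitting and minors: for $a\in E(M)\setminus\{x,y\}$ one has $M_{x,y}\setminus a=(M\setminus a)_{x,y}$ and $M_{x,y}/a=(M/a)_{x,y}$. These show that the class of cographic matroids all of whose splittings are cographic is minor-closed, so it admits a finite forbidden-minor description; they also let me transfer a bad pair in a minor of $M$ up to a bad pair in $M$, which handles the necessity direction. It then suffices to exhibit, for each of $G_1$ and $G_2$ (which are planar, so $M(G_1),M(G_2)$ are indeed cographic), one pair $\{x,y\}$ for which the split matroid contains one of the four obstructions, e.g.\ an $F_7$- or $M(K_5)$-minor; this is a direct finite computation on the two small graphs of Figure 1.

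The sufficiency direction is where the real work lies, and I expect it to be the main obstacle. Here I would argue contrapositively: assuming some $M_{x,y}$ has a minor $N\in\{F_7,F_7^*,M(K_5),M(K_{3,3})\}$, I want to force $M$ to contain $M(G_1)$ or $M(G_2)$. Using the commutation lemmas I can push all contractions and deletions not involving $x,y$ past the splitting, reducing to the case in which $N$ arises from a cographic matroid by adjoining the single cocircuit $\chi_{\{x,y\}}$ and then taking a short minor on $\{x,y\}$; because $N$ has rank exactly one more than a cographic matroid and $\chi_{\{x,y\}}$ meets $N$ in at most two elements, the ways a single extra cocircuit can complete a cographic matroid to one of these fixed small matroids are highly constrained. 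Enumerating these completions and reading off the underlying cographic minor in each case should leave precisely $M(G_1)$ and $M(G_2)$. The delicate point is controlling the interaction between the added row and contractions of $x$ or $y$ themselves, where the clean commutation lemmas fail and the rank bookkeeping changes, and verifying that no further minimal obstruction survives; this is exactly the step that the dual ``adjoin-a-circuit'' viewpoint from the first paragraph makes tractable, since there one instead completes a graphic matroid by a single parallel pair and can appeal to concrete graph surgery.
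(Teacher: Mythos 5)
First, note that the paper does not prove this statement at all: Theorem~\ref{c2bsd} is quoted from \cite{borse2014excluded} and used as a black box (e.g.\ in the proof of Theorem~\ref{nproof}), so there is no in-paper argument to compare yours against. Judged on its own, your proposal is a reasonable plan but not a proof. The setup is sound: splitting adjoins the row $\chi_{\{x,y\}}$, the class of cographic matroids with all splittings cographic is minor-closed via the commutation identities $M_{x,y}\setminus a=(M\setminus a)_{x,y}$ and $M_{x,y}/a=(M/a)_{x,y}$ for $a\notin\{x,y\}$, and necessity reduces to a finite check on $G_1$ and $G_2$ (which you do not actually exhibit, though it is routine). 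But the sufficiency direction, which is the entire content of a forbidden-minor characterization, is left at the level of ``enumerating these completions \ldots\ should leave precisely $M(G_1)$ and $M(G_2)$.'' That enumeration --- classifying all cographic matroids $N$ containing $\{x,y\}$ for which $N_{x,y}$, $N_{x,y}/x$ or $N_{x,y}/\{x,y\}$ is isomorphic to one of $F_7$, $F_7^*$, $M(K_5)$, $M(K_{3,3})$, and then showing each such $N$ already contains $M(G_1)$ or $M(G_2)$ --- is exactly the lengthy case analysis that constitutes the proof in \cite{borse2014excluded}, and you explicitly flag its delicate points (the failure of the commutation lemmas when $x$ or $y$ is contracted, and the verification that no further minimal obstruction survives) without resolving them.

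Two smaller inaccuracies are worth fixing if you pursue this. The claim $r(M_{x,y})=r(M)+1$ fails when $\chi_{\{x,y\}}$ already lies in the row space of $A$ (e.g.\ when $\{x,y\}$ is a $2$-cocircuit of $M$, in which case $M_{x,y}=M$, as recorded in observation (iii) of Section 2); your rank bookkeeping for the ``completion'' step must accommodate this degenerate case rather than assume it away. And the dual description needs care: adjoining $\chi_{\{x,y\}}$ to the cycle space of $M^*$ is a legitimate binary operation, but when $M^*=M(G)$ it is not simply the addition of a parallel pair --- it is a vertex identification making $x$ and $y$ into a cycle --- so the promised ``concrete graph surgery'' still has to be specified and carried out.
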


Similar characterization is obtained by Borse et al. \cite{borse2015characterization}, they  characterized graphic matroids which yield cographic matroids under the splitting operation with respect to a pair of elements.  Naiyer \cite{pirouz2015graphic} obtained such a  characterization of the class of cographic matroids which yield graphic matroids.

In this paper, we  characterize regular matroids which yield graphic matroids under the splitting operation with respect to a pair of elements.   Further, we give alternate shorter proofs to two known results about getting cographic (graphic) splitting matroids from regular (cographic) matroids with respect to  a pair of elements.

In the next section, we provide  a shorter and alternate proof of a known result about a characterization of  cographic matroids giving graphic splitting matroids with respect to a pair of elements.   In Section 3, we characterize regular matroids which yield graphic (or cographic)  matroids under the splitting operation with respect to a pair of elements.  

\section{Graphic or Cographic Splitting}

In this section, we discuss  a forbidden-minor characterization for class of graphic (cographic) matroids which yield cographic (graphic) matroids under the splitting operation with respect to a pair of elements. There are some obvious forbidden-minors for these classes.    Naiyer \cite{pirouz2015graphic} obtained a  characterization of the class of cographic matroids which yield graphic matroids under the splitting with respect to a pair of elements.  We give an alternate short proof of this result. 

We use the following results. 
\begin{theorem} \cite{ox}. \label{c1crm} 
	A binary matroid is regular if and only if it has no minor isomorphic to $F_7$ or  $F_7^*.$ 
\end{theorem}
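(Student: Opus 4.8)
The plan is to prove both implications, the forward (necessity) direction being routine and the converse (sufficiency) being the substantial part. For necessity, recall that a matroid is regular precisely when it admits a totally unimodular (TU) representation, and is therefore representable over every field; moreover the class of regular matroids is closed under both duality and the taking of minors. The Fano matroid $F_7 = PG(2,2)$ is representable only over fields of characteristic two, so it is not representable over $GF(3)$ and hence is not regular; by closure under duality, $F_7^*$ is not regular either. Since regularity is minor-closed, no regular matroid can contain $F_7$ or $F_7^*$ as a minor, which settles the ``only if'' direction.

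For sufficiency, let $M$ be a binary matroid with no minor isomorphic to $F_7$ or $F_7^*$; the goal is to exhibit a TU representation of $M$. First I would fix a $GF(2)$-representation $[\,I \mid D\,]$ of $M$, where $D$ is a $\{0,1\}$-matrix, and try to replace each nonzero entry of $D$ by $\pm 1$ so that the resulting real matrix $[\,I \mid D'\,]$ is TU. The point of this reduction is that $[\,I \mid D'\,]$ reduces modulo $2$ to $[\,I \mid D\,]$, so if it is TU then, since every nonzero subdeterminant is $\pm 1$ and hence nonzero mod $2$, it represents the same matroid $M$ over every field; thus success would prove $M$ regular. I may assume the bipartite support graph $G(D)$ is connected, since otherwise $D$ is, after permutations, block-diagonal, $M$ is a direct sum, and we sign each block independently.

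The signing is organized by Camion's theorem: when the support is connected, any two TU signings of $D$ differ only by scaling rows and columns by $\pm 1$, so there is an essentially canonical candidate $D'$, obtained by propagating signs along a spanning tree of $G(D)$ so that every chordless square receives the product forced by total unimodularity. It then remains only to verify that this canonical $D'$ is genuinely TU, i.e. that every square submatrix has determinant in $\{0,\pm 1\}$. This is where the excluded minors must enter, and it is the combinatorial heart of the argument.

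The hard part will be precisely this final verification, which I would carry out by contraposition. Suppose $[\,I \mid D'\,]$ is not TU, and choose a square submatrix $S$ of minimum order with $|\det S| \geq 2$. A determinant analysis in the spirit of Gerards' short proof shows that such a minimal $S$ must have $|\det S| = 2$ and, after the pivots that correspond to matroid minor operations, reduces to the standard representation of $F_7$ (or, upon dualizing, of $F_7^*$). Thus any failure of the canonical signing produces an $F_7$ or $F_7^*$ minor in $M$, contradicting the hypothesis; hence $D'$ is TU and $M$ is regular. I expect the two delicate points to be the minimality and determinant bookkeeping that pins $|\det S|$ down to exactly $2$, and the careful translation of a ``bad signed square'' into a genuine Fano or dual-Fano minor. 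As an alternative route one could instead show that the hypotheses force $M$ to be ternary (using that a binary matroid has no $U_{2,4}$, and hence no $U_{2,5}$ or $U_{3,5}$, minor) and then invoke the fact that a matroid that is both binary and ternary is regular, but this merely relocates the difficulty into equally deep excluded-minor results.
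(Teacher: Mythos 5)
The paper does not prove this statement: it is Tutte's excluded-minor characterization of regular matroids, quoted from Oxley's book and used as a black box, so there is no in-paper argument to compare yours against. Your necessity direction is complete and correct: $F_7$ is representable only in characteristic two, hence not regular, and regularity is closed under duality and under taking minors, so $F_7^*$ is excluded as well.

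The sufficiency direction, however, is a roadmap rather than a proof. The reduction to a connected support block, the appeal to Camion's signing theorem to produce a canonical candidate $D'$, and the choice of a minimum-order submatrix $S$ with $|\det S|\geq 2$ are all the right moves --- this is exactly the structure of Gerards' short proof --- but the entire substance of the theorem lives in the two steps you defer: showing that minimality forces $|\det S|=2$, and showing that a minimal violating signed submatrix, after the pivots corresponding to minor operations, yields the Fano or dual-Fano configuration in $M$. As written, ``a determinant analysis in the spirit of Gerards' short proof shows\dots'' is an appeal to the literature, not an argument; if you intend to cite that step you may as well cite the whole theorem, as the paper does. Your alternative route (binary plus no $F_7, F_7^*$ minor implies ternary, and binary plus ternary implies regular) has the same character: as you concede, it relocates the difficulty into excluded-minor theorems of comparable depth. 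Either carry out the determinant and minor-extraction analysis in full, or present the statement as a citation.
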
 
\begin{theorem}\cite{ox} \label{c1cgm}
	A binary matroid is graphic if and only if it has no minor isomorphic to $F_7$, $F_7^*$, $M^*(K_{3,3})$ or $M^*(K_5)$. 
	
\end{theorem}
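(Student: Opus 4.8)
The plan is to establish the two implications separately, using the paper's regularity characterization (Theorem~\ref{c1crm}) to peel off $F_7$ and $F_7^*$ and thereby reduce the substantive work to the regular case.

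For the forward implication (necessity), I would first record that the class of graphic matroids is minor-closed, since deleting or contracting an element of $M(G)$ corresponds to deleting or contracting the associated edge of $G$. Hence it is enough to check that none of the four listed matroids is graphic. Every graphic matroid is regular, because the oriented incidence matrix of a graph is totally unimodular; so by Theorem~\ref{c1crm} the non-regular matroids $F_7$ and $F_7^*$ cannot be graphic. For the remaining two I would use the matroid form of Whitney's planarity criterion: $M^*(G)$ is graphic if and only if $G$ is planar. As $K_5$ and $K_{3,3}$ are non-planar by Kuratowski's theorem, $M^*(K_5)$ and $M^*(K_{3,3})$ are not graphic, completing this direction.

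For the reverse implication (sufficiency), let $M$ be a binary matroid having none of the four matroids as a minor. Since $M$ has neither an $F_7$ nor an $F_7^*$ minor, Theorem~\ref{c1crm} shows that $M$ is regular, so the claim reduces to the assertion that a regular matroid with no $M^*(K_5)$ and no $M^*(K_{3,3})$ minor is graphic. To prove this I would invoke Seymour's decomposition theorem for regular matroids: every regular matroid is obtainable from graphic matroids, cographic matroids, and copies of $R_{10}$ by means of $1$-, $2$-, and $3$-sums, and each basic piece appearing in the decomposition is a minor of $M$. Because the class of graphic matroids is closed under these three sums, if $M$ were not graphic then at least one basic piece would be non-graphic, and such a piece is either $R_{10}$ or a cographic matroid $M^*(H)$ with $H$ non-planar. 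In the cographic case Kuratowski's theorem yields $K_5$ or $K_{3,3}$ as a graph minor of $H$, so that piece carries $M^*(K_5)$ or $M^*(K_{3,3})$ as a minor; in the case of $R_{10}$ one uses that $R_{10}\setminus e \cong M(K_{3,3})$ for every element $e$, whence by self-duality $R_{10}/e \cong M^*(K_{3,3})$. Either way $M$ would contain a forbidden minor, contradicting the hypothesis, so $M$ is graphic.

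I expect essentially all of the difficulty to be concentrated in the regular case, which is exactly Tutte's classical theorem. The reductions above are routine, but the step that a regular matroid avoiding the two cographic minors is graphic rests on deep structure theory — either Seymour's regular-matroid decomposition as sketched, or, in Tutte's original route, a chain-group/bridge argument; a third option is a splitter-theorem induction that reduces a $3$-connected regular matroid to a wheel (which is graphic) and then controls graphicness as single elements are added back, each failure producing one of the excluded minors. I would choose the Seymour-decomposition argument, since it makes the bookkeeping shortest, at the cost of citing the heaviest black box.
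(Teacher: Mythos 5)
The paper does not prove this statement at all: it is Tutte's classical excluded-minor characterization of graphic matroids, quoted from Oxley purely as background, so there is no internal proof to compare yours against. Your necessity direction is sound: graphic matroids form a minor-closed class of regular matroids, so $F_7$ and $F_7^*$ are excluded by Theorem~\ref{c1crm}, and Whitney's planarity criterion together with Kuratowski's theorem rules out $M^*(K_5)$ and $M^*(K_{3,3})$. The reduction of sufficiency to the regular case via Theorem~\ref{c1crm} is also fine, and the auxiliary facts you lean on are correct (graphic matroids are closed under $1$-, $2$- and $3$-sums; every single-element contraction of $R_{10}$ is isomorphic to $M^*(K_{3,3})$; the basic pieces in Theorem~\ref{ro} are minors of $M$).

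The genuine gap is circularity in the sufficiency direction. Seymour's decomposition theorem is not logically prior to Tutte's theorem: the standard proofs of the decomposition theorem (Seymour's original argument and the treatments derived from it) invoke precisely the statement you are trying to establish --- that a regular matroid with no $M^*(K_5)$ or $M^*(K_{3,3})$ minor is graphic --- in the step showing that a $3$-connected regular matroid avoiding $R_{10}$ and $R_{12}$ is graphic or cographic. So citing the decomposition as a black box does not prove Tutte's theorem unless you also supply a derivation of the decomposition that avoids it (Truemper's development comes closest, but that is a substantial detour you would need to make explicit). Of the three routes you list, the one you decline --- Tutte's chain-group/bridge argument, or a splitter-theorem induction down to wheels, which is essentially what Oxley presents --- is the one that actually closes the argument; the Seymour route you say you would choose does not, at least not without breaking the dependency cycle.
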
 

\begin{theorem}\cite{ox} \label{c1ccgm}
	A binary matroid is cographic if and only if it has no minor isomorphic to $F_7$, $F_7^*$, $M(K_{3,3})$ or $M(K_5)$.
\end{theorem}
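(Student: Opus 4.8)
The plan is to derive this statement from Theorem \ref{c1cgm} by pure duality, since \emph{cographic} is by definition the dual notion to \emph{graphic}: a binary matroid $M$ is cographic precisely when its dual $M^*$ is graphic. Thus the entire proof reduces to translating the excluded-minor list for graphic matroids through the duality operation, and no independent structural analysis is required.

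First I would record the two facts about duality that make this work. The first is that duality interchanges deletion and contraction, namely $(M \setminus e)^* = M^* / e$ and $(M / e)^* = M^* \setminus e$; iterating these identities shows that a matroid $N$ is isomorphic to a minor of $M$ if and only if $N^*$ is isomorphic to a minor of $M^*$. The second is that the four excluded minors appearing in Theorem \ref{c1cgm} pair up correctly under duality: $(F_7)^* = F_7^*$ and $(F_7^*)^* = F_7$, while $(M^*(K_{3,3}))^* = M(K_{3,3})$ and $(M^*(K_5))^* = M(K_5)$, using that the dual of the cocycle matroid of a graph is its cycle matroid.

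With these in hand the argument is a short chain of equivalences. A binary matroid $M$ is cographic iff $M^*$ is graphic; by Theorem \ref{c1cgm} this holds iff $M^*$ has no minor isomorphic to any of $F_7$, $F_7^*$, $M^*(K_{3,3})$, $M^*(K_5)$; and by the minor--dual correspondence this is in turn equivalent to $M = (M^*)^*$ having no minor isomorphic to any of the duals $F_7^*$, $F_7$, $M(K_{3,3})$, $M(K_5)$. Since this dualized family is exactly the set $\{F_7, F_7^*, M(K_{3,3}), M(K_5)\}$, the desired characterization follows.

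I do not expect a serious obstacle here: the mathematical content is carried entirely by Theorem \ref{c1cgm}, and the remaining work is the routine but essential bookkeeping of verifying that each excluded minor dualizes into the claimed one and that the minor relation is preserved under taking duals. The single point that must be stated carefully rather than glossed is the minor--dual correspondence itself, since it is precisely what lets the forbidden-minor condition pass from $M^*$ back to $M$.
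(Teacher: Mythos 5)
Your duality derivation is correct: cographic means the dual is graphic, minors dualize via $(M\backslash e)^*=M^*/e$ and $(M/e)^*=M^*\backslash e$, and the excluded-minor list of Theorem \ref{c1cgm} maps onto the list in the statement since $(F_7)^*=F_7^*$, $(F_7^*)^*=F_7$, $(M^*(K_{3,3}))^*=M(K_{3,3})$ and $(M^*(K_5))^*=M(K_5)$. The paper itself gives no proof --- it cites the result from Oxley alongside Theorem \ref{c1cgm} --- and your argument is precisely the standard way the cographic characterization is obtained from the graphic one, so there is nothing to object to.
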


 First, we discuss  some trivial forbidden-minors for the class of graphic (cographic) matroids which yield cographic (graphic) matroids under the splitting operation with respect to a pair of elements.  Suppose $M$ is a binary matroid and let $ x$ and $y$ be elements of $M.$  The following statements follow from the definition of the splitting matroid $M_{x, y}.$ 

(i)  If $x$ or $y$ is a coloop of $M$, then both $x$ and $y$ are coloops of $M_{x,y};$

(ii) If none of $x$ and $y$ is a coloop of $M$, then  $\{x,y\}$ is a 2-cocircuit of $M_{x,y}$; 

(iii) If $\{x,y\}$ is a 2-cocircuit of $M$, then $M_{x,y}=M.$

Using the above observations, we introduce the following notation. 	
\noindent 
\vskip.3cm
{\bf Notation:} Given a binary matroid $M$, let $\widetilde{M}$ be the collection of binary matroids $N$ containing a pair of elements $ x$ and $y$ satisfying one of the following conditions. 
\begin{enumerate}
	\item $ N \backslash \{x, y\} = M$; 
	\item $\{x, y\}$ is a 2-cocircuit of $N$ and  $ N/ x = M$; 
	\item  $\{x, y\}$ is a 2-cocircuit of $N$ and $ N/ \{x, y\} = M.$ 
\end{enumerate}

Therefore if $N \in \widetilde{M}$, then $N\backslash \{x,y\}=M$ or $N/x=M$ or $N/\{x,y\}=M$ for some $x, y \in E(N)$.

In the following result, we prove that the splitting with respect to some pair of elements of any matroid beloging to the class $\widetilde{F}$ contains a minor $F$. 
\begin{lemma} \label{c4triv}
	Let $M$ be  a binary matroid containing a minor belonging to the class $\widetilde{F}$ for some matroid $F$. Then the splitting matroid $M_{x,y}$ contains $F$ as a minor for some $x, y \in E(M)$.
\end{lemma}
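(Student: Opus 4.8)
The plan is to exploit the minor $N \in \widetilde{F}$ together with the fact that the splitting operation commutes with deletion and contraction of elements lying outside the splitting pair. Since $N$ is a minor of $M$, I write $N = M \setminus D / C$ for disjoint sets $C, D \subseteq E(M)$, and I let $x, y$ be the pair of elements of $N$ that witnesses $N \in \widetilde{F}$ via one of the three conditions in the definition of $\widetilde{F}$; in particular $x, y \in E(N)$, so $x, y \notin C \cup D$. I then use this same pair $x,y$ as the splitting pair for $M$.

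The first key step is the commutation identity: for any $S \subseteq E(M) \setminus \{x,y\}$,
\[
M_{x,y} \setminus S = (M \setminus S)_{x,y} \quad \text{and} \quad M_{x,y} / S = (M / S)_{x,y}.
\]
The deletion identity is immediate from Definition \ref{cds11}: deleting columns outside $\{x,y\}$ from $A_{x,y}$ leaves the adjoined row, with its two $1$'s in columns $x$ and $y$, intact. For the contraction identity I would pivot on a column $c \in S$ in the representation $A$; because the adjoined row of $A_{x,y}$ has a $0$ in column $c$, that row is never selected as a pivot row and is unaffected when the remaining entries of the pivot column are cleared. Deleting the pivot row and the column $c$ therefore leaves the adjoined row unchanged, giving $M_{x,y}/c = (M/c)_{x,y}$; iterating over the elements of $S$ yields the full identity. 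Combining the two identities gives $M_{x,y} \setminus D / C = N_{x,y}$.

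The second step is to show that $F$ is a minor of $N_{x,y}$ in each of the three cases of the definition of $\widetilde{F}$. In case (1) we have $N \setminus \{x,y\} = F$, and deleting $x$ and $y$ from $N_{x,y}$ turns the adjoined row into a zero row, so $N_{x,y} \setminus \{x,y\} = N \setminus \{x,y\} = F$. In cases (2) and (3), $\{x,y\}$ is a $2$-cocircuit of $N$, so by observation (iii) above we have $N_{x,y} = N$; hence $F = N/x = N_{x,y}/x$ in case (2) and $F = N/\{x,y\} = N_{x,y}/\{x,y\}$ in case (3). Thus in every case $F$ is a minor of $N_{x,y}$.

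Finally, combining the two steps, $F$ is a minor of $N_{x,y} = M_{x,y} \setminus D / C$, and hence a minor of $M_{x,y}$, which is exactly what is claimed. The main obstacle is the contraction half of the commutation identity: one must argue carefully at the level of the matrix $A_{x,y}$ that pivoting on columns disjoint from $\{x,y\}$ fixes the adjoined row, so that contraction genuinely commutes with the splitting operation. Once this is in hand, the deletion half and the three-case analysis are routine.
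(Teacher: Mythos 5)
Your proposal is correct and follows essentially the same route as the paper: both reduce to the minor $N$ via the commutation of splitting with deletion/contraction of elements outside $\{x,y\}$, and then handle the extension case by deleting $\{x,y\}$ and the $2$-cocircuit coextension cases via the identity $N_{x,y}=N$. The only difference is that you justify the contraction half of the commutation identity by an explicit pivoting argument, whereas the paper simply asserts it as easy to see.
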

\begin{proof} Suppose $M$ contains a minor $N$  belonging to the class $ \widetilde{F}$. Then there exists $T_1, T_2 \subset E(M)$ such that $M\backslash T_1/T_2 =N$.

	(i)	Suppose $N$ is a coextension of $F$ by an element $x$ of $F$, so that $\{x,y\}$ is a 2-cocircuit of $N$ for some $y \in E(F)$ or $N$ is coextension of $F$ by two elements $x,y$ such that $\{x,y\}$ is a 2-cocircuit of $N$.   Then $|\{x,y\} \cap(T_1 \cup T_2)|= \phi$ and   $N_{x,y}= N$. Also, it is easy to see that $M_{x,y}\backslash T_1/T_2 =(M\backslash T_1/T_2)_{x,y}=N_{x,y}=N$.  Therefore $M_{x,y}$ contains  $F$ as a minor. 
	
	(ii) Suppose $N$ is an extension of $F$ by two elements, say  $x$ and $y$.  
	Then $N\backslash \{x,y\}\cong F$. Also, $M_{x,y} \backslash T_1/T_2 \backslash \{x,y\} = N_{x,y} \backslash \{x,y\} = N\backslash \{x,y\}=F$.	Therefore $M_{x,y}$ contains  $F$ as a minor. 	
\end{proof}

By Theorem \ref{c1ccgm}, a graphic matroid $M$ is not cographic if it contains $M(K_5)$ or $M(K_{3,3})$. 
Hence by  Lemma \ref{c4triv}, the members of the classes $ \widetilde M( {K_5})$ and $ \widetilde M( {K_{3,3}})$  are the forbidden-minors for the class of graphic matroids which yield cographic matroids under the splitting operation with respect to a pair of elements. 
Similarly, the classes $\widetilde M^*({K_5})$ and $ \widetilde M^*({K_{3,3}})$ contain matroids which are the forbidden-minors for the class of cographic matroids which yield graphic matroids under the splitting operation with respect to a pair of elements. 

Suppose $M$ is graphic and $M_{x,y}$ is not cographic for some $x,y \in E(M)$. Then the following lemma proves that $M$ contains a special type of minor. 

\begin{lemma}\label{c2pairgtocg}
	Let $M$ be a graphic matroid.  If $M_{x,y}$ is not cographic for some $x,y \in E(M)$, then one of the following holds.
	\begin{enumerate}
		\item  There is a minor $N$ of $M$ such that $N \in \widetilde M( {K_5})$ or $N \in \widetilde M( {K_{3,3}})$.
		\item There is a minor $N$ of $M$ containing $x$ and $y$ and avoiding 2-cocircuit such that $N_{x,y}/x \cong F$ or $N_{x,y}/\{x,y\} \cong F$ for some   $ F \in \{F_7, F^*_7, M(K_5),M(K_{3,3})\}$. 
		
	\end{enumerate}
\end{lemma}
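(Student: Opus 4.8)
The plan is to use the characterization of cographic matroids (Theorem \ref{c1ccgm}) as the organizing principle. Since $M$ is graphic and $M_{x,y}$ is \emph{not} cographic, $M_{x,y}$ must contain one of the four excluded minors $F \in \{F_7, F_7^*, M(K_5), M(K_{3,3})\}$. So there exist disjoint sets $T_1, T_2 \subseteq E(M_{x,y})$ with $M_{x,y} \backslash T_1 / T_2 \cong F$. The whole argument will hinge on where the splitting elements $x$ and $y$ sit relative to $T_1$ and $T_2$, and I would organize the proof as a case analysis on whether $x,y$ are deleted, contracted, or retained in passing from $M_{x,y}$ to $F$.

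\textbf{The case analysis.} First I would record the behavior of the added row under minor operations: deleting an element from $M_{x,y}$ corresponds to deleting it from $M$ (and adjusting the extra row), while contracting an element that is not a coloop behaves compatibly with the splitting construction. The cleanest split is: either both $x$ and $y$ are removed (by deletion or contraction) in forming $F$, or at least one of them survives into $F$. In the first subcase, I would argue that $F$ itself is actually a minor of $M$ (since once $x$ and $y$ are gone the extra row plays no role, and $M_{x,y}$ restricted away from $x,y$ agrees with the corresponding minor of $M$); but $M$ is graphic, so $M$ cannot contain $F_7$, $F_7^*$, $M(K_5)$, or $M(K_{3,3})$ as a minor unless $F \in \{M(K_5), M(K_{3,3})\}$ — and in that event the minor $N$ of $M$ witnessing this, together with the elements $x,y$, is exactly a member of $\widetilde M(K_5)$ or $\widetilde M(K_{3,3})$, giving conclusion (1). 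The key bookkeeping is to show that the $2$-cocircuit structure created by splitting places $N$ in the classes defined by the three conditions in the Notation.

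\textbf{The surviving-element subcase.} If at least one of $x, y$ persists into $F$, I would take $N$ to be the minor of $M$ obtained by performing only the $T_1$-deletions and $T_2$-contractions that do \emph{not} involve $x$ or $y$, chosen so that $N$ contains $x$ and $y$ and avoids having $\{x,y\}$ as a $2$-cocircuit (using observation (iii), that a $2$-cocircuit makes splitting trivial, to justify we may assume this). Then $N_{x,y}$ is a minor of $M_{x,y}$, and the remaining operations on $x$ and $y$ — which must be contractions, since $\{x,y\}$ forms a $2$-cocircuit in $M_{x,y}$ by observation (ii) and a coloop cannot be deleted down to $F$ without becoming a contraction — realize $F$ as $N_{x,y}/x$ or $N_{x,y}/\{x,y\}$. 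This yields conclusion (2).

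\textbf{Main obstacle.} The delicate point, and the one I expect to require the most care, is the commutation of the splitting operation with minor operations on elements \emph{other} than $x,y$, and especially the interaction of contraction with the extra row. Specifically, I must verify that $(M \backslash T_1 / T_2)_{x,y} = M_{x,y} \backslash T_1 / T_2$ when $x, y \notin T_1 \cup T_2$ — this is the identity that lets me push the minor structure of $M_{x,y}$ back onto $M$ — and I must handle the subtlety that contracting $x$ (or $y$) in $M_{x,y}$ may or may not be a legitimate step depending on whether it is a loop or coloop there. Correctly reducing to the situation where $\{x,y\}$ is not already a $2$-cocircuit (so that splitting is nontrivial) and confirming that the only way an excluded minor $F$ can appear while $x$ survives is via contraction of $x$ (or of both $x$ and $y$) is the crux; once that reduction is clean, assembling conclusions (1) and (2) is routine.
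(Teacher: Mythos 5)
Your overall strategy coincides with the paper's: invoke Theorem \ref{c1ccgm} to get $M_{x,y}\backslash T_1/T_2\cong F$, split $T_i$ into the part meeting $\{x,y\}$ and the rest, use the commutation $(M\backslash T_1''/T_2'')_{x,y}=M_{x,y}\backslash T_1''/T_2''$ to produce the minor $N$, and then do a case analysis on how $x$ and $y$ are disposed of. However, your central case division contains a genuine error. You claim that whenever both $x$ and $y$ are removed in passing from $N_{x,y}$ to $F$ (``by deletion or contraction''), the extra row ``plays no role'' and $F$ is a minor of $M$, landing in conclusion (1). This is true only when both are \emph{deleted}. If $x$ is \emph{contracted}, the pivot on the extra row folds that row into the other rows and replaces column $y$ by $a_x+a_y$; consequently $N_{x,y}/\{x,y\}$ is the column matroid of $N$'s columns modulo the single vector $a_x+a_y$, which has rank $r(N)-1$ rather than $r(N)-2$ and is in general \emph{not} a minor of $N$ or of $M$. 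This is precisely the situation that conclusion (2) is designed to capture through the alternative $N_{x,y}/\{x,y\}\cong F$; if your subcase were correct, that alternative could be deleted from the statement of the lemma.

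A related mismatch: your second subcase (``at least one of $x,y$ survives into $F$'') is supposed to deliver conclusion (2), but one of the two outcomes there, $N_{x,y}/\{x,y\}\cong F$, removes both elements, so it does not belong to that subcase; and the situation where both elements survive, i.e.\ $N_{x,y}\cong F$ with $T_1'=T_2'=\emptyset$, is not addressed at all. The paper rules that out by noting that $\{x,y\}$ contains a cocircuit of size at most $2$ in $N_{x,y}$, while each $F\in\{F_7,F_7^*,M(K_5),M(K_{3,3})\}$ is $3$-connected and has no coloop or $2$-cocircuit; the same observation disposes of the case where exactly one of $x,y$ is deleted (the other becomes a coloop). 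The correct partition is therefore: both deleted (conclusion (1), and here graphicness of $M$ forces $F\in\{M(K_5),M(K_{3,3})\}$); both retained or exactly one deleted (impossible); at least one contracted (conclusion (2), after the $2$-cocircuit reduction you correctly anticipate). Your identification of the commutation identity and of the need to reduce to $N$ without a $2$-cocircuit is sound, but the deletion/contraction conflation must be repaired before the argument goes through.
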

\begin{proof} Let $ F \in \{F_7, F^*_7, M(K_5), M(K_{3,3})\}$. Therefore $F$ is a 3-connected matroid.   Assume that $M_{x,y}$ is not cographic.  Then, by Theorem \ref{c1ccgm},  $M_{x,y}$ has a minor isomorphic to $F.$ Therefore $M_{x,y} \backslash T_1/T_2 \cong F$ for some $ T_1, T_2 \subset E(M).$  Let $ T_i' = \{x,y\} \cap T_i$ and $ T_i'' = T_i - T_i'$ for $ i = 1, 2.$ Then $T_i'$ is a subset of $\{x,y\}$ while $T_i''$ is disjoint from $\{x,y\}.$  Then  $M_{x,y} \backslash T_1''/T_2'' = (M \backslash T_1''/T_2'')_{x,y}.$ Let $N = M \backslash T_1''/T_2''.$ Then $N$ is a minor of $M$  containing $\{x,y\}$ such that $ F \cong M_{x,y} \backslash T_1 /T_2 = N_{x,y}\backslash T_1'/T_2'.$
	Since $N$ is a minor of the graphic matroid $M$, $N$ is also graphic.

	Suppose $|T_2'|= \phi$. Then $F\cong N_{x,y} \backslash T_1'$. If $|T_1'|=\phi$, then $N_{x,y} \cong F$. Since $\{x,y\}$ contains a cocircuit of  $N_{x,y}$, but $F$ does not contain a loop or a  2-cocircuit, we get a contradiction.  If $|T_1'|=1$, say $T_1'=\{x\}$, then $y$ is a coloop of $N_{x,y}\backslash T_1'$ and $N_{x,y}\backslash x \cong F$, again a contradiction.  Hence  $|T'_1|=2$. Then $T_1'=\{x,y\}$ and  $N_{x,y}\backslash T_1'=N_{x,y}\backslash \{x,y\} =N\backslash \{x,y\}\cong F$. In this  case, $N$ is an extension of $F$ by $\{x,y\}$. Since $M$ is graphic, $F\neq F_7$ or $F\neq F^*_7$. Hence $F=M(K_5)$ or $M(K_{3,3})$.  Therefore $N \in \widetilde{M}(K_5)$ or $N \in \widetilde{M}(K_{3,3})$.  	 
	
	Suppose $|T_1'|=\phi$.   Then $T_2'\neq \emptyset$ and $N_{x,y}/T_2' \cong F$. If $T_2'=\{x,y\}$, then $N_{x,y}/\{x,y\} \cong F$.  Suppoe $|T_2'|=1$. Then $T_2'=\{x\}$ or $T_2'=\{y\}$. Since $\{x,y\}$ is a 2-cocircuit of $N$ or both $x$ and $y$ are coloops of $N$, we have  $N_{x,y}/x\cong N_{x,y}/y$. Hence we may assume that $T_2'=\{x\}$ and so $N_{x,y}/x\cong F$.

	Now, we prove that if $N_{x,y}/x \cong F$ or $N_{x,y}/\{x,y\} \cong F$, then $N$ does not contain a 2-cocircuit.
	On the contrary, assume that $\{x_1,x_2\}$ is a cocircuit of $N$. Then $\{x_1,x_2\}$ is a cocircuit of $N_{x,y}$. If $\{x,y\} \cap \{x_1,x_2\}=\emptyset$ then $N_{x,y} \backslash T_1' /T_2'$  contains $\{x_1,x_2\}$ as a cocircuit, a contradiction to the fact that $F$ does not contain 2-cocircuit. Assume that $|\{x,y\} \cap \{x_1,x_2\}|=1$. Suppose $x=x_1$. Therefore $N_{x,y}/x_1 =N_{x,y}/x_2$. In this case, we can replace $N$ by $N/x_2$. Hence $\{x_1,x_2\}=\{x,y\}.$  Then $N_{x,y}=N$ and so $N \ncong F$.  Therefore $F \cong N_{x,y}/x=N/x$ or $F\cong N_{x,y} /\{x,y\}=N /\{x,y\}$. Hence $N$ contains a minor belonging to the class $\widetilde{M}(K_5)$ or $\widetilde{M}(K_{3,3})$. 
\end{proof}

Similarly, if $M$ is a cographic matroid such that $M_{x,y}$ is not graphic for some $x,y \in E(M)$, then  $M$ contains a special type of minor. 
\begin{lemma}\label{2paircgtog}
	Let $M$ be a cographic matroid.  If $M_{x,y}$ is not graphic for some $x,y \in E(M)$, then one of the following holds.
	\begin{enumerate}
		\item  There is a minor $N$ of $M$ such that $N\in \widetilde M^*( {K_5})$ or $N \in \widetilde M^*( {K_{3,3}})$.
		\item There is a  minor $N$ of $M$ containing $x$ and $y$ and avoiding 2-cocircuit  such that $N_{x,y}/x \cong F$ or $N_{x,y}/\{x,y\} \cong F$ for some 
		$ F \in \{F_7, F^*_7, M^*(K_5), M^*(K_{3,3})\}$. 
		
	\end{enumerate}
\end{lemma}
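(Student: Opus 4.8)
The plan is to follow the proof of Lemma \ref{c2pairgtocg} almost verbatim, interchanging throughout the roles of graphic and cographic matroids and invoking the graphic excluded-minor theorem (Theorem \ref{c1cgm}) in place of the cographic one (Theorem \ref{c1ccgm}). First I would record the two structural facts on which the whole argument rests: each $F \in \{F_7, F_7^*, M^*(K_5), M^*(K_{3,3})\}$ is $3$-connected — indeed $K_5$ and $K_{3,3}$ are $3$-connected graphs, so their cycle matroids are $3$-connected, and duality preserves $3$-connectivity — and consequently each such $F$ has neither a loop nor a $2$-cocircuit. These are exactly the properties that force the contradictions in the case analysis below.

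Assuming $M_{x,y}$ is not graphic, Theorem \ref{c1cgm} supplies a minor $M_{x,y} \backslash T_1 / T_2 \cong F$ for some such $F$. As before I would split each $T_i$ as $T_i = T_i' \cup T_i''$ with $T_i' = \{x,y\} \cap T_i$ and $T_i'' = T_i \setminus T_i'$, and use that the splitting operation commutes with deletion and contraction of elements distinct from $x$ and $y$ to write $M_{x,y} \backslash T_1'' / T_2'' = (M \backslash T_1'' / T_2'')_{x,y}$. Setting $N = M \backslash T_1'' / T_2''$, the matroid $N$ is a cographic minor of $M$ containing $\{x,y\}$ and satisfies $F \cong N_{x,y} \backslash T_1' / T_2'$ with $T_1', T_2' \subseteq \{x,y\}$.

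The remainder is the same case analysis on $|T_1'|$ and $|T_2'|$. When $T_2' = \emptyset$, the subcases $|T_1'| \in \{0,1\}$ are impossible, since $\{x,y\}$ contains a cocircuit of $N_{x,y}$ while $F$ has no loop or $2$-cocircuit; the subcase $|T_1'| = 2$ forces $N \backslash \{x,y\} \cong F$, and here the key duality step occurs: because $N$ is cographic it has no $F_7$ or $F_7^*$ minor, so $F \in \{M^*(K_5), M^*(K_{3,3})\}$, placing $N$ in $\widetilde M^*(K_5)$ or $\widetilde M^*(K_{3,3})$ and yielding conclusion (1). When $T_1' = \emptyset$ and $T_2' \neq \emptyset$, using $N_{x,y}/x \cong N_{x,y}/y$ (valid since $\{x,y\}$ is a $2$-cocircuit of $N_{x,y}$ or $x,y$ are both coloops) reduces matters to $N_{x,y}/x \cong F$ or $N_{x,y}/\{x,y\} \cong F$. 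A final replacement argument — contracting across any putative $2$-cocircuit $\{x_1,x_2\}$ of $N$ exactly as in Lemma \ref{c2pairgtocg} — shows $N$ may be taken free of $2$-cocircuits, so that $N_{x,y}/x = N/x$ or $N_{x,y}/\{x,y\} = N/\{x,y\}$, giving conclusion (2).

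Because the argument is essentially self-dual, I do not anticipate a genuine obstacle. The only points needing verification rather than assumption are the two opening structural facts and the commuting identity for splitting, all of which are routine; and the elimination of $F_7, F_7^*$, where one must note that it is now the \emph{cographicness} of $M$ that rules these out and that the surviving matroids $M^*(K_5), M^*(K_{3,3})$ are themselves cographic, so the case distinction remains consistent with $N$ being cographic.
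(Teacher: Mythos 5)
Your proposal is correct and is precisely what the paper intends: the paper states this lemma with only the word ``Similarly,'' leaving the proof as the dualization of Lemma \ref{c2pairgtocg}, and your write-up carries out exactly that dualization (swapping Theorem \ref{c1cgm} for Theorem \ref{c1ccgm} and using cographicness of $N$ to exclude $F_7$ and $F_7^*$). No genuine differences from the paper's approach.
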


The minor $N$ of $M$ in condition (1) of Lemma \ref{c2pairgtocg} and Lemma \ref{2paircgtog} are \textit{trivial minor}. Therefore, we avoid such trivial forbidden-minors for the class of graphic(cographic) matroids $M$ whose splitting matroids $M_{x,y}$ are   cographic(graphic).

Borse et al.\cite{borse2015characterization}  characterized graphic matroids whose splitting matroids are cographic by assuming condition (2) of Lemma \ref{c2pairgtocg}.

\begin{theorem} \cite{borse2015characterization} \label{bsn1} Let $M$ be a graphic matroid without containing a minor belonging to  the class $\widetilde{N}$, where $N\in \{M(K_5), M(K_{3,3})\}$.  Then  $M_{x,y}$ is cographic for any $x,y \in E(M)$ if and only if  $M$ has no minor isomorphic to any of the circuit matroids $M(G_1)$,  $ M(G_2)$ and $M(G_5)$,  where $G_1$, $G_2$ and $G_5$ are the graphs as shown in Figure 1.
\end{theorem}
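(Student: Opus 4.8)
The plan is to prove both implications through Lemma \ref{c2pairgtocg} together with the elementary fact that the splitting operation commutes with deletion and contraction of the remaining elements. For the \textbf{necessity} direction, first record that for any $e \in E(M)\backslash\{x,y\}$ one has $(M\backslash e)_{x,y} = M_{x,y}\backslash e$ and $(M/e)_{x,y}=M_{x,y}/e$, so that whenever $N$ is a minor of $M$ with $x,y\in E(N)$, the matroid $N_{x,y}$ is a minor of $M_{x,y}$. Since the class of cographic matroids is minor-closed, it suffices to show that each of $M(G_1)$, $M(G_2)$, $M(G_5)$ admits a pair of elements whose splitting is not cographic: a minor isomorphic to one of them would then force some $M_{x,y}$ to be non-cographic. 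I verify this by a direct computation, adjoining the splitting row to the standard representation and exhibiting inside $M(G_i)_{x,y}$ a minor isomorphic to one of $F_7$, $F_7^{*}$, $M(K_5)$, $M(K_{3,3})$, applying Theorem \ref{c1ccgm}; this is a finite check.

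For \textbf{sufficiency} I argue by contraposition: suppose $M_{x,y}$ is not cographic for some $x,y$. Because $M$ contains no minor in $\widetilde{M}(K_5)$ or $\widetilde{M}(K_{3,3})$, alternative (1) of Lemma \ref{c2pairgtocg} is impossible, so alternative (2) holds: there is a graphic minor $N$ of $M$, containing $x$ and $y$, with no $2$-cocircuit, such that $N_{x,y}/x\cong F$ or $N_{x,y}/\{x,y\}\cong F$ for some $F\in\{F_7,F_7^{*},M(K_5),M(K_{3,3})\}$. Since $x,y$ are neither coloops nor a $2$-cocircuit of $N$, the adjoined row is independent of the rows of the representation of $N$, whence $r(N_{x,y})=r(N)+1$; therefore $r(N)=r(F)$ or $r(F)+1$ and $|E(N)|$ equals $|E(F)|+1$ or $|E(F)|+2$. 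Writing $N=M(H)$, the graph $H$ is thus small and, by the absence of $2$-cocircuits, has no nontrivial $2$-edge-cut. Pivoting on the adjoined row of $A_{x,y}$ shows that $N_{x,y}/x$ is represented by the matrix obtained from that of $N$ by deleting the column of $x$ and adding it to the column of $y$, and $N_{x,y}/\{x,y\}$ is read off similarly, so both matroids can be computed directly from $H$.

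I then separate two regimes. If $x$ and $y$ are adjacent edges of $H$, the matroid splitting coincides with Fleischner's graph splitting, $N_{x,y}=M(H_{x,y})$ is graphic, and so are $N_{x,y}/x$ and $N_{x,y}/\{x,y\}$; hence $F$ cannot be the non-graphic $F_7$ or $F_7^{*}$, and the surviving possibilities $F=M(K_5),M(K_{3,3})$ are controlled by the graphic-to-graphic characterization of Theorem \ref{sw}. When $x,y$ are non-adjacent I use the explicit representations above to translate $N_{x,y}/x\cong F$ (or $N_{x,y}/\{x,y\}\cong F$) into edge deletions and contractions on $H$ producing one of $G_1$, $G_2$, $G_5$. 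The expected picture is that the non-graphic targets $F_7,F_7^{*}$ force a minor $M(G_1)$ or $M(G_2)$ (the obstructions whose splitting is neither graphic nor cographic), while the graphic but non-cographic targets $M(K_5),M(K_{3,3})$ force the single new minor $M(G_5)$; note that $r(M(G_5)_{x,y}/x)$ and $|E(M(G_5)_{x,y}/x)|$ already match $M(K_{3,3})$, which makes $G_5$ the natural carrier of this case. This also explains why the obstruction $M(G_3)$ of Theorem \ref{sw} drops out here: its bad splitting is cographic, containing $M^{*}(K_5)$ or $M^{*}(K_{3,3})$ rather than $M(K_5)$ or $M(K_{3,3})$.

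The work is entirely in the sufficiency direction, and within it the cases $F=M(K_5)$ and $F=M(K_{3,3})$, where $N$ is largest (up to twelve elements) and several non-isomorphic graphs $H$ can realize the prescribed splitting-contraction. The crux, and the \emph{main obstacle}, is to show that every such $H$, once the configurations already caught by $M(G_1)$ and $M(G_2)$ are set aside, must contain $M(G_5)$ as a minor; here the $3$-connectedness of $F$ and the absence of $2$-cocircuits in $N$ are precisely what prune the analysis to a short, verifiable list.
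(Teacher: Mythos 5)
The paper you are being compared against does not actually prove Theorem~\ref{bsn1}: it is imported verbatim from \cite{borse2015characterization} and used as a black box (the only theorems proved in this paper are Theorem~\ref{c2n1n}, the two theorems of Section~3, and the two lemmas). So there is no in-paper proof to measure your argument against; what can be said is whether your proposal would stand on its own, and in its present form it would not.

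Your overall architecture is reasonable and consistent with how the paper handles the analogous results: necessity via the fact that splitting commutes with deletion and contraction of elements other than $x,y$, plus a finite check that each of $M(G_1),M(G_2),M(G_5)$ has a pair whose splitting is non-cographic; sufficiency via Lemma~\ref{c2pairgtocg}, where the hypothesis on $\widetilde{M}(K_5)$ and $\widetilde{M}(K_{3,3})$ kills alternative (1) and leaves alternative (2). The rank and size bounds you derive for $N$ are also correct. The genuine gap is that the entire content of the theorem sits in the step you label the ``main obstacle'' and then do not carry out: you assert, as ``the expected picture,'' that the cases $F=F_7,F_7^*$ force an $M(G_1)$ or $M(G_2)$ minor and that $F=M(K_5),M(K_{3,3})$ force $M(G_5)$, but no enumeration of the candidate graphs $H$ is given, and nothing rules out a realizing $H$ that avoids all three obstructions. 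A dimension count showing that $M(G_5)_{x,y}/x$ has the same rank and size as $M(K_{3,3})$ is evidence that $G_5$ is the right carrier, not a proof that every realization reduces to it. A further unexamined point: Lemma~\ref{c2pairgtocg} as stated only records the subcases $T_2'=\emptyset$ and $T_1'=\emptyset$, so if you lean on it you should also dispose of the mixed case in which one of $x,y$ is deleted and the other contracted (e.g.\ by showing $N_{x,y}\backslash x/y$ reduces to one of the listed forms when $\{x,y\}$ is not a cocircuit). As written, your text is a credible plan for reproving the theorem of \cite{borse2015characterization}, but the exhaustive case analysis that constitutes the proof is still owed.
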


Naiyer \cite{pirouz2015graphic} characterized cographic matroids which give graphic splitting matroids with respect to a pair of elements in the following theorem by assuming condition (2) of Lemma \ref{2paircgtog}.  
\begin{theorem} \cite{pirouz2015graphic} \label{c2n1n} Let $M$ be a cographic matroid without containing a minor belonging to the class $\widetilde{N}$, where $N\in \{M^*(K_5), M^*(K_{3,3})\}$.  Then $M_{x,y}$ is graphic for any $x,y \in E(M)$  if and only if $M$ has no minor isomorphic to any of the circuit matroids $M(G_1)$ and  $ M(G_2)$,  where $G_1$ and $G_2$  are the graphs as shown in Figure 1.
\end{theorem}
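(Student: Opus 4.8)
The plan is to prove both implications of Theorem~\ref{c2n1n} by reducing, in each direction, to a finite analysis of small cographic matroids, the connective tissue being the commutation of splitting with deletion and contraction of elements outside $\{x,y\}$ — the identity $M_{x,y}\backslash T_1/T_2=(M\backslash T_1/T_2)_{x,y}$ valid whenever $x,y\notin T_1\cup T_2$, already exploited in Lemma~\ref{c4triv}.

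For necessity I would argue contrapositively. Suppose $M$ has a minor isomorphic to $M(G_1)$ or $M(G_2)$, say $M(G_i)=M\backslash T_1/T_2$ with $x,y\in E(M(G_i))$. Since $G_1$ and $G_2$ occur in the forbidden list of Theorem~\ref{sw}, the graphic matroid $M(G_i)$ admits a pair $\{x,y\}$ for which the splitting $M(G_i)_{x,y}$ is non-graphic; by Theorem~\ref{c1cgm} this means $M(G_i)_{x,y}$ has a minor in $\{F_7,F_7^*,M^*(K_5),M^*(K_{3,3})\}$. As $M(G_i)_{x,y}=M_{x,y}\backslash T_1/T_2$ is a minor of $M_{x,y}$, the matroid $M_{x,y}$ is itself non-graphic. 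This step is a finite verification on the two explicit graphs of Figure~1.

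For sufficiency I would again work contrapositively: assume $M_{x,y}$ is non-graphic for some $x,y$ and deduce an $M(G_1)$- or $M(G_2)$-minor. By hypothesis $M$ has no minor in $\widetilde M^*(K_5)$ or $\widetilde M^*(K_{3,3})$, so alternative (1) of Lemma~\ref{2paircgtog} is ruled out and alternative (2) must hold: there is a cographic minor $N$ of $M$, containing $x$ and $y$ and having no $2$-cocircuit, with $N_{x,y}/x\cong F$ or $N_{x,y}/\{x,y\}\cong F$ for some $3$-connected $F\in\{F_7,F_7^*,M^*(K_5),M^*(K_{3,3})\}$. The first task is to eliminate $F=F_7$ and $F=F_7^*$. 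To this end I would pass to the binary single-element coextension $\hat N$ of $N$ whose representation is $A_{x,y}$ with one further column $z$ equal to the standard basis vector of the splitting row; then $\hat N\backslash z=N_{x,y}$, $\hat N/z=N$, and $\{x,y,z\}$ is a triad of $\hat N$, so dually $\hat N^*$ is the extension of the graphic matroid $N^*$ in which $\{x,y,z\}$ is a triangle. I expect that in this picture $\hat N$ (equivalently $\hat N^*$) is forced to be regular, so that its minor $F$, being regular, cannot be $F_7$ or $F_7^*$; if a clean regularity argument is unavailable, the exclusion of $F_7$ and $F_7^*$ can instead be absorbed into the finite check below. Either way one is left with $F\in\{M^*(K_5),M^*(K_{3,3})\}$.

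With $F$ confined to $M^*(K_5)$ or $M^*(K_{3,3})$, the matroid $N$ has at most two more elements than $F$ and hence at most twelve elements. The core of the argument — and the step I expect to be the main obstacle — is the resulting finite structural analysis: enumerate, up to isomorphism, the cographic matroids $N=M^*(H)$ (equivalently the small graphs $H$ with $M(H)=N^*$) that contain $x,y$, avoid $2$-cocircuits, and satisfy $N_{x,y}/x\cong F$ or $N_{x,y}/\{x,y\}\cong F$, and then verify that every such $N$ has a minor isomorphic to $M(G_1)$ or $M(G_2)$. The $3$-connectedness of $F$ together with the triad $\{x,y,z\}$ in $\hat N$ should reduce this to a short list of cases rather than a blind search. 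Once each surviving $N$ is shown to contain $M(G_1)$ or $M(G_2)$ as a minor, so does $M$, which completes the contrapositive and hence the theorem.
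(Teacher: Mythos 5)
Your necessity argument is sound and is essentially what the paper does: apply Theorem \ref{sw} to the graphic minor $M(G_i)$ itself to find a pair whose splitting is non-graphic, then use the commutation of splitting with deletion/contraction outside $\{x,y\}$ to lift this to $M_{x,y}$. The sufficiency direction, however, contains a genuine gap: the entire weight of the proof rests on a finite structural analysis --- enumerating, up to isomorphism, the cographic, $2$-cocircuit-free matroids $N$ on at most twelve elements with $N_{x,y}/x\cong F$ or $N_{x,y}/\{x,y\}\cong F$ for $F\in\{M^*(K_5),M^*(K_{3,3})\}$, and verifying that each has an $M(G_1)$- or $M(G_2)$-minor --- and this analysis is announced but never carried out; you yourself label it ``the main obstacle.'' The auxiliary step excluding $F_7$ and $F_7^*$ is likewise only conjectured: the claim that the coextension $\hat N$ is ``forced to be regular'' is unsupported (splitting is not known to preserve regularity --- this is exactly why $F_7$ and $F_7^*$ must be carried along in Lemma \ref{2paircgtog}), and deferring it to the same unexecuted finite check does not repair matters. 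As written, the proposal is a plan for the original long proof of this theorem rather than a proof.

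The paper bypasses all of this with a single observation you did not find: $M^*(K_{3,3})$ and $M^*(K_5)$ themselves contain $M(G_1)$ and $M(G_2)$ as minors (one exhibits planar minors $G_6$ of $K_{3,3}$ and $G_7$ of $K_5$ with $M^*(G_6)=M(G_1)$ and $M^*(G_7)=M(G_2)$). Consequently a cographic matroid with no $M(G_1)$- or $M(G_2)$-minor has no $M^*(K_5)$- or $M^*(K_{3,3})$-minor, hence is graphic by Theorem \ref{c1cgm}; being both graphic and cographic it cannot contain $M(K_5)=M(G_3)$ by Theorem \ref{c1ccgm}, and it cannot contain $M(G_4)$ since $M(G_4)$ has an $M(G_1)$-minor. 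Theorem \ref{sw} then applies directly to give that $M_{x,y}$ is graphic for every pair, with no need for Lemma \ref{2paircgtog}, the coextension $\hat N$, or any enumeration. To make your route work you would have to actually perform the case analysis; otherwise the reduction above is the argument to use.
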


We observe that the above theorem follows easily from the known result as stated  in Theorem \ref{sw}. Thus, we give a very short alternate proof  of the above theorem as follows.

\begin{proof} [{\bf Proof of Theorem \ref{c2n1n}}] 
	Let $M$ be a cographic matroid.  Suppose $M$ contains a minor isomorphic to $M(G_1)$ or $ M(G_2)$. Then, by Theorem \ref{sw}, $M_{x,y}$ is not graphic for some $x$ and $y$ of $E(M).$

	Conversely, suppose $M$ does not contain a minor isomorphic to  any of the matroids $M(G_1)$ and $ M(G_2)$.  Assume that $M$ is not graphic.   Then, by Theorem \ref{c1ccgm}, $M$ contains $M^*(K_5)$ or $M^*(K_{3,3})$ as a minor.  
	\begin{center}
		\unitlength 1mm 
		\linethickness{0.4pt}
		\ifx\plotpoint\undefined\newsavebox{\plotpoint}\fi 
		\begin{picture}(59.725,43.285)(0,0)
		\put(43.06,31.62){\circle*{1.33}}
		\put(43.06,18.62){\circle*{1.33}}
		\put(59.06,18.62){\circle*{1.33}}
		\put(59.06,31.62){\circle*{1.33}}
		\put(51.06,42.62){\circle*{1.33}}
		\put(1.06,25.62){\circle*{1.33}}
		\put(9.64,32.37){\circle*{1.33}}
		\put(25.98,32.37){\circle*{1.33}}
		\put(36.48,25.12){\circle*{1.33}}
		\put(25.98,19.37){\circle*{1.33}}
		\put(9.64,19.37){\circle*{1.33}}
		\put(43.06,31.62){\line(1,0){16}}
		\put(59.06,31.62){\line(0,-1){13}}
		\put(59.06,18.62){\line(-1,0){16}}
		\put(43.06,18.62){\line(0,1){13}}
		\put(43.06,31.62){\line(3,4){8.33}}
		\put(51.06,42.62){\line(3,-4){8.33}}
		\put(9.64,32.37){\line(1,0){16.33}}
		\put(25.98,19.37){\line(-1,0){16.33}}
		\put(25.98,32.37){\line(-5,-4){16.33}}
		\put(25.98,19.37){\line(-5,4){16.33}}
		\put(35,4.75){\makebox(0,0)[cc]{Figure 2}}
		\multiput(25.75,32.25)(.05,-.03372093){215}{\line(1,0){.05}}
		\multiput(36.5,25)(-.061403509,-.033625731){171}{\line(-1,0){.061403509}}
		\multiput(9.75,32.75)(-.043269231,-.033653846){208}{\line(-1,0){.043269231}}
		\multiput(.5,25.75)(.047927461,-.033678756){193}{\line(1,0){.047927461}}
		\multiput(43,31.75)(.0407124682,-.0337150127){393}{\line(1,0){.0407124682}}
		\multiput(43,18.75)(.042989418,.0337301587){378}{\line(1,0){.042989418}}
		\put(19,12.75){\makebox(0,0)[cc]{$G_6$}}
		\put(51.25,12.75){\makebox(0,0)[cc]{$G_7$}}
		\end{picture}
		
	\end{center}
	Let  $G_6$ and $G_7$ be the graphs as shown in Figure 2. Then $M(G_6)$ and $M(G_7)$ are the minors of $M(K_{3,3})$ and $M(K_5)$, respectively. Hence $M^*(G_6)$ and $M^*(G_7)$ are the minors of $M^*(K_{3,3})$ and $M^*(K_5)$, respectively. One can easily check that  $M^*(G_6)=M(G_1)$ and $M^*(G_7)=M(G_2)$.
	Therefore $M$  contains a minor isomorphic to $M(G_1)$ or $M(G_2)$, a contradiction. 
	Thus, $M$ is graphic and cographic.  Hence $M$ does not contain $M(K_5)$ as a minor. Note that $M(G_3)=M(K_5),$ where $G_3$ is the graph as shown in Figure 1.	Therefore, by Theorem \ref{sw}, $M_{x,y}$ is graphic.  \end{proof}

\section{Splitting of Regular Matroids}
In this section, we characterize the class of \textit{regular} matroids which yield \textit{graphic} matroids under the splitting with respect to a pair.  Naiyer et al. \cite{pirouz2011excluded} obtained a characterization of  the class of \textit{regular} matroids which yield \textit{cographic} matroids under the splitting operation with respect to  a pair of elements. We provide an alternate and very short proof of this result.

We need the following well known result about regular matroids.
\begin{theorem} \cite{ox} \label{ro} Every regular matroid $M$ can be constructed by means of direct sums, $2$-sums and $3$-sums starting with matroids each of which is isomorphic to a minor of $M$, and each
	of which is either graphic, cographic, or isomorphic to $R_{10}.$
\end{theorem}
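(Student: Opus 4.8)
The plan is to recognize that this is Seymour's celebrated regular matroid decomposition theorem, so rather than reproving it I would cite it directly (as the authors do via \cite{ox}); nevertheless the strategy of the known proof runs as follows. The argument is an induction on $|E(M)|$ that reduces the general case to the $3$-connected case by peeling off lower-order separations. If $M$ is disconnected it is the $1$-sum (direct sum) of its components; if $M$ is connected but not $3$-connected it has an exact $2$-separation and hence is a $2$-sum $M = M_1 \oplus_2 M_2$ of two proper minors of $M$. In each case the pieces are again regular, since regularity is a minor-closed property (Theorem \ref{c1crm}) and is preserved under $1$-, $2$-, and $3$-sums, so the inductive hypothesis applies to them. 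Thus it suffices to treat a $3$-connected regular matroid $M$ and show it is graphic, cographic, isomorphic to $R_{10}$, or expressible as a proper $3$-sum of two smaller regular matroids.

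For the $3$-connected case I would use two sporadic matroids as pivots. First, $R_{10}$ is a \emph{splitter} for the class of regular matroids: via Seymour's Splitter Theorem one checks that $R_{10}$ admits no $3$-connected regular single-element extension or coextension, so any $3$-connected regular matroid possessing an $R_{10}$-minor must be isomorphic to $R_{10}$ itself. Hence we may assume $M$ has no $R_{10}$-minor. Second, the twelve-element matroid $R_{12}$ carries an exact $3$-separation $(A,B)$ with $|A|=|B|=6$; the core of the decomposition step is to show that whenever a $3$-connected regular matroid $M$ has an $R_{12}$-minor, this separation lifts to a $3$-separation $(X,Y)$ of $M$ with both sides large, which then exhibits $M$ as a proper $3$-sum $M = M_1 \oplus_3 M_2$ of two regular matroids, each a minor of $M$ and smaller than $M$. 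Applying the induction hypothesis to $M_1$ and $M_2$ closes this branch.

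The remaining and genuinely hardest step is the base case: proving that a $3$-connected regular matroid with no minor isomorphic to $R_{10}$ or $R_{12}$ must be graphic or cographic. I expect this to be the main obstacle, since it is precisely the technical heart of Seymour's paper and cannot be extracted from a short connectivity argument. It requires a delicate analysis of how the circuits and cocircuits of a totally unimodular representation of $M$ interact, organized around the absence of the two forbidden pivots, in order to force $M$ into one of the two graph-like families. Once this structural dichotomy is established, combining it with the splitter role of $R_{10}$, the $R_{12}$-induced $3$-sum, and the $1$- and $2$-sum reductions completes the induction and yields the stated decomposition into graphic pieces, cographic pieces, and copies of $R_{10}$, each a minor of $M$.
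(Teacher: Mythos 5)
The paper offers no proof of this statement---it is quoted verbatim as Seymour's regular matroid decomposition theorem and cited from Oxley \cite{ox}---and your proposal likewise (and correctly) treats it as a result to be cited rather than reproved, while your accompanying sketch of Seymour's argument (reduction to the $3$-connected case via $1$- and $2$-sums, $R_{10}$ as a splitter, the $R_{12}$-induced $3$-sum, and the hard base case that a $3$-connected regular matroid with no $R_{10}$- or $R_{12}$-minor is graphic or cographic) is an accurate outline of the standard proof. So your approach matches the paper's.
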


\noindent{\bf Observation:} We prove below that the circuit matroid $M(G_1)$ is a minor of the matroid $R_{10}$, where $G_1$ is the graph as shown in Figure 1. 

Let $A$ be the  standard matrix representation of $R_{10}$. Then

\vskip.5cm
\begin{center} 
	$A = $\bordermatrix{ ~&1&2&3&4&5&6&7&8&9&10\cr
		~&1&0&0&0&0&1&1&0&0&1\cr
		~&0&1&0&0&0&1&1&1&0&0\cr
		~&0&0&1&0&0&0&1&1&1&0\cr
		~&0&0&0&1&0&0&0&1&1&1\cr   
		~&0&0&0&0&1&1&0&0&1&1}. 
\end{center}
\vskip .2cm
Let $T=\{4,5\}$. Then  the standard matrix representation of $R_{10}/\{4,5\}$ is as follows

\vskip.5cm

\begin{center} 	
	$B = $\bordermatrix{ ~&1&2&3&6&7&8&9&10\cr
		~&1&0&0&1&1&0&0&1\cr
		~&0&1&0&1&1&1&0&0\cr
		~&0&0&1&0&1&1&1&0}.
\end{center}
It is easy to check that the standard matrix representation of $M(G_1)$ is $B$.  Therefore $M(G_1)$ is a minor of regular matroid $R_{10}.$

By using the above observation,  we obtain a characterization of the class of regular matroids which yield graphic matroids under the splitting operation with respect to a pair of elements as follows.

\begin{theorem}  Let $M$ be a regular matroid without containing a minor belonging to the class  $\widetilde{N}$, where $N \in \{M^*(K_5), M^*(K_{3,3})\}$. 	Then the splitting matroid of $M$ with respect to any pair of elements is graphic if and only if $M$ does not contain a minor isomorphic to any of the matroids $M(G_1)$,  $ M(G_2)$ and $M(K_5)$, where $G_1$ and $G_2$ are the graphs as shown in Figure 1 and $K_5$ is the complete graph on $5$ vertices. 
\end{theorem}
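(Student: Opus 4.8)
The plan is to mirror the structure of the short proof of Theorem \ref{c2n1n}, using the decomposition of regular matroids (Theorem \ref{ro}) together with the forbidden-minor characterizations already in hand. First I would dispose of the easy direction. Suppose $M$ contains a minor isomorphic to $M(G_1)$, $M(G_2)$, or $M(K_5)$. For $M(G_1)$ and $M(G_2)$, these are graphic matroids whose splitting fails to be graphic by Theorem \ref{sw} (recall $M(G_1)$ and $M(G_2)$ are two of the forbidden minors there); applying Lemma \ref{c4triv}-style reasoning, or directly the fact that splitting a minor produces the corresponding splitting minor, forces $M_{x,y}$ to be non-graphic for some pair. For $M(K_5) = M(G_3)$, the same conclusion follows again from Theorem \ref{sw}. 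Thus the presence of any of the three excluded minors obstructs graphic splitting.

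For the converse, assume $M$ is regular, contains no minor in the class $\widetilde{N}$ for $N\in\{M^*(K_5),M^*(K_{3,3})\}$, and contains no minor isomorphic to $M(G_1)$, $M(G_2)$, or $M(K_5)$. I want to show every splitting $M_{x,y}$ is graphic. The key step is to argue that under these hypotheses $M$ is in fact \emph{graphic}, at which point Theorem \ref{sw} finishes the job: the forbidden minors there are $M(G_1)$, $M(G_2)$, $M(G_3)=M(K_5)$ (with $M(G_1)$ a minor of $M(G_4)$), all of which we have excluded. To prove $M$ is graphic I would use the regular decomposition: $M$ is built from graphic pieces, cographic pieces, and copies of $R_{10}$ via direct sums, $2$-sums, and $3$-sums. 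Since $M$ has no $M^*(K_5)$ or $M^*(K_{3,3})$ minor directly (not merely no member of $\widetilde{M^*(K_5)}$, $\widetilde{M^*(K_{3,3})}$ — I would need to verify the trivial-minor exclusion also rules out the bare minors, as those bare minors lie in their own $\widetilde{\ }$-classes via condition (1)), $M$ cannot contain a cographic obstruction. The \textbf{Observation} preceding the theorem is exactly the lever for the $R_{10}$ case: since $M(G_1)$ is a minor of $R_{10}$ and $M(G_1)$ is excluded, no $R_{10}$ piece can appear in the decomposition of $M$.

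The heart of the argument is therefore: any regular matroid $M$ with no $M^*(K_5)$, no $M^*(K_{3,3})$, and no $R_{10}$ minor is graphic. One clean route is to invoke Theorem \ref{c1cgm}: a binary matroid is graphic iff it has no minor isomorphic to $F_7$, $F_7^*$, $M^*(K_{3,3})$, or $M^*(K_5)$. Regularity of $M$ already excludes $F_7$ and $F_7^*$ by Theorem \ref{c1crm}, and the hypotheses exclude $M^*(K_{3,3})$ and $M^*(K_5)$. Hence $M$ is immediately graphic, and the $R_{10}$/decomposition discussion becomes a supporting observation rather than a necessity. I would present this as the main line since it is the shortest, keeping the $R_{10}$ remark to explain why $M(K_5)$ must be listed separately as a third excluded minor (it is the one graphic-but-not-cographic obstruction that can arise).

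The main obstacle I anticipate is bookkeeping around the trivial minors: the theorem hypothesis excludes minors in $\widetilde{M^*(K_5)}$ and $\widetilde{M^*(K_{3,3})}$, and I must confirm this is strong enough to exclude the bare minors $M^*(K_5)$ and $M^*(K_{3,3})$ needed to apply Theorem \ref{c1cgm}. This should follow because $M^*(K_5)$ itself arises as a member of $\widetilde{M^*(K_5)}$ under condition (1) (taking $N$ to be an extension by two elements and then deleting), but the precise matching requires care; alternatively one argues contrapositively via Lemma \ref{2paircgtog}. Once graphicness of $M$ is secured, the remaining step is a direct citation of Theorem \ref{sw} applied to the excluded graphic minors $M(G_1)$, $M(G_2)$, $M(G_3)=M(K_5)$, completing the proof.
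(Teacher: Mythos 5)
Your forward direction is fine, and your overall strategy for the converse (show that $M$ is outright graphic, then quote Theorem \ref{sw}) is sound, but the one step you flag as needing care is broken in the way you propose to justify it. By the paper's definition, every member of $\widetilde{F}$ has strictly more elements than $F$: conditions (1) and (3) describe two-element extensions or coextensions, and condition (2) a one-element coextension carrying a $2$-cocircuit. Hence $M^*(K_5)\notin\widetilde{M^*(K_5)}$, so the hypothesis that $M$ has no minor in $\widetilde{M^*(K_5)}$ or $\widetilde{M^*(K_{3,3})}$ does \emph{not} exclude the bare minors $M^*(K_5)$ and $M^*(K_{3,3})$; your parenthetical claim that these matroids lie in their own $\widetilde{F}$-classes via condition (1) is false, and the fallback through Lemma \ref{2paircgtog} does not apply either, since that lemma assumes $M$ is cographic while here $M$ is only regular. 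The correct lever is the \emph{other} hypothesis: exactly as in the paper's proof of Theorem \ref{c2n1n}, $M(G_1)=M^*(G_6)$ is a minor of $M^*(K_{3,3})$ and $M(G_2)=M^*(G_7)$ is a minor of $M^*(K_5)$, so the assumed exclusion of $M(G_1)$ and $M(G_2)$ already rules out $M^*(K_{3,3})$ and $M^*(K_5)$ as minors. With that substitution your main line closes: regularity excludes $F_7$ and $F_7^*$ by Theorem \ref{c1crm}, the $M(G_1),M(G_2)$ exclusions handle $M^*(K_{3,3})$ and $M^*(K_5)$, Theorem \ref{c1cgm} then gives that $M$ is graphic, and Theorem \ref{sw} finishes since $M(G_1)$, $M(G_2)$ and $M(G_3)=M(K_5)$ are all excluded.

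For comparison, the paper argues differently in the converse: it invokes the decomposition Theorem \ref{ro} to conclude that $M$ is graphic or cographic or has an $R_{10}$ minor, eliminates the $R_{10}$ case by the Observation that $M(G_1)$ is a minor of $R_{10}$, and then applies Theorem \ref{sw} in the graphic case and Theorem \ref{c2n1n} in the cographic case. Your route, once repaired as above, is shorter and bypasses both the decomposition theorem and the $R_{10}$ computation; the decomposition discussion then indeed becomes, as you suspected, a supporting remark rather than a necessity.
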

\begin{proof} Note that $M(G_3)=M(K_5)$, where $G_3 $ is the graph as shown in Figure 1.  Suppose $M$ contains a minor isomorphic to $M(G_1)$,  $ M(G_2)$ or $M(K_5)$.  Then, by Theorem \ref{sw}, the splitting matroid $M_{x,y}$  is not graphic for some $x, y \in E(M)$.

	Conversely, suppose $M$ does not contain any of the matroids $M(G_1)$,  $ M(G_2)$ and $M(K_5)$ as a minor. We prove that $M_{x,y}$ is graphic for any $x$ and $y$.    Since $M$ is a regular matroid,  by Theorem \ref{ro}, $M$ is graphic or cographic or contains $R_{10}$ as a minor. 
	Suppose $R_{10}$ is a minor of $M$.  As observed before $R_{10}$ contains $M(G_1)$ as a minor. Hence $M(G_1)$ is a minor of $M$, a contradiction. Therefore $M$ is graphic or cographic. 	If $M$ is graphic then, by Theorem \ref{sw}, $M_{x,y}$ is graphic. Also,  if $M$ is cographic, then, by Theorem \ref{c2n1n},  $M_{x,y}$ is graphic. 
	Therefore $M_{x,y}$ is graphic for any $x,y \in E(M)$.
\end{proof}

Naiyer et al. \cite{pirouz2011excluded} obtained the forbidden-minors for the class of regular matroids $M$ whose splitting matroids  $M_{x,y}$  are cographic by assuming condition (2) of Lemmas \ref{c2pairgtocg} and \ref{2paircgtog} as follows. 

\begin{theorem} \cite{pirouz2011excluded} \label{n}  
	Let $M$ be a regular matroid without containing a minor belonging to the class $\widetilde{N}$, where $N\in \{M(K_5), M(K_{3,3})\}$.  Then $M_{x,y}$ is cographic for any $x,y \in E(M)$  if and only if $M$ has no minor isomorphic to any of the  matroids $M(G_1)$, $ M(G_2)$ and $M[A_1]$, where $G_1$ and $G_2$  are the graphs  in
	Figure 1 and  $A_1$ is the following matrix.
\end{theorem}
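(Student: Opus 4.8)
The plan is to reuse the architecture of the two proofs already in the paper, namely the proof of Theorem~\ref{c2n1n} and the regular-to-graphic result established just above: I would invoke the decomposition Theorem~\ref{ro} to reduce to a graphic case and a cographic case, and then quote the existing characterisations in those two cases. Everything rests on the identity recorded inside the proof of Lemma~\ref{c2pairgtocg}, that $M_{x,y}\backslash T_1/T_2=(M\backslash T_1/T_2)_{x,y}$ whenever $T_1,T_2$ are disjoint from $\{x,y\}$; that is, splitting commutes with deletions and contractions carried out away from the split pair. Hence if $N$ is a minor of $M$ and $N_{x,y}$ is not cographic, then $N_{x,y}$ is a minor of $M_{x,y}$, and since the class of cographic matroids is minor-closed, $M_{x,y}$ is not cographic either.

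For necessity, suppose $M$ has a minor $F\in\{M(G_1),M(G_2),M[A_1]\}$. Each such $F$ is graphic and contains no member of $\widetilde{M}(K_5)$ or $\widetilde{M}(K_{3,3})$ as a minor (for $M(G_1)$ and $M(G_2)$ because they are too small, and for $M[A_1]\cong M(G_5)$ because $G_5$ is $3$-edge-connected, so $M(G_5)$ has no $2$-cocircuit and the ten-element count leaves no room for such a minor). Thus Theorem~\ref{bsn1} applies to $F$ itself, and as $F$ is one of the minors $M(G_1),M(G_2),M(G_5)$ that theorem forbids, it yields a pair $\{x,y\}\subseteq E(F)$ with $F_{x,y}$ not cographic. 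The commutation identity lifts this to $M$, so $M_{x,y}$ is not cographic; each of the three minors is therefore genuinely forbidden.

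For sufficiency, assume $M$ is regular, has no $\widetilde{N}$-minor for $N\in\{M(K_5),M(K_{3,3})\}$, and has no minor isomorphic to $M(G_1)$, $M(G_2)$, or $M[A_1]$. By Theorem~\ref{ro}, $M$ is graphic, cographic, or contains $R_{10}$ as a minor. The $R_{10}$ case cannot occur: by the Observation preceding this theorem $R_{10}$ contains $M(G_1)$ as a minor, so an $R_{10}$-minor would force the forbidden $M(G_1)$-minor. If $M$ is cographic, then since it avoids $M(G_1)$ and $M(G_2)$, Theorem~\ref{c2bsd} gives that $M_{x,y}$ is cographic for every pair. If $M$ is graphic, then it is a graphic matroid with no $\widetilde{N}$-minor, so Theorem~\ref{bsn1} applies; the minors it forbids are $M(G_1)$, $M(G_2)$, and $M(G_5)\cong M[A_1]$, all ruled out by hypothesis, so $M_{x,y}$ is cographic for every pair. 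This settles both directions.

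The only part carrying genuine content is the handling of the third excluded minor. One must verify from the matrix $A_1$ that $M[A_1]\cong M(G_5)$, an explicit computation in the spirit of the paper's Observation that $R_{10}/\{4,5\}=M(G_1)$, and then confirm that this matroid is graphic (indeed non-cographic, which explains why it only obstructs the graphic case) and $\widetilde{N}$-minor-free, so that Theorem~\ref{bsn1} may legitimately be applied to it in the necessity step. A secondary point needing a word of justification is the reduction from the full decomposition of Theorem~\ref{ro} to the clean trichotomy used above: it is precisely the hypothesis forbidding $\widetilde{N}$-minors that eliminates the mixed $2$- and $3$-sums which are neither graphic nor cographic and yet carry no $R_{10}$-minor.
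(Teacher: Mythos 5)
Your overall architecture (decompose via Theorem~\ref{ro}, use the commutation of splitting with deletions and contractions performed away from $\{x,y\}$, and reduce to the graphic and cographic cases) is the same as the one the paper uses elsewhere, but your proof breaks down exactly where the paper says the theorem itself breaks down. The crucial error is the identification $M[A_1]\cong M(G_5)$: reading off the matrix, columns $1$--$5$ and $11$ form an identity and row $6$ meets only column $11$, so $M[A_1]$ is the direct sum of $R_{10}$ with a coloop. In particular $M[A_1]$ has eleven elements, is not graphic (it contains $R_{10}$), is not isomorphic to the ten-element graphic matroid $M(G_5)$, and Theorem~\ref{bsn1} cannot be applied to it in your necessity step. (Necessity for $M[A_1]$ can be rescued differently, since $M[A_1]$ contains $R_{10}$ and hence $M(G_1)$ as a minor by the paper's Observation; but that same containment shows $M[A_1]$ is redundant in the list of forbidden minors.)

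More seriously, the sufficiency direction has a genuine gap that the false identification was concealing. In the graphic case you invoke Theorem~\ref{bsn1}, which requires $M$ to avoid $M(G_1)$, $M(G_2)$ \emph{and} $M(G_5)$; excluding $M(G_1)$, $M(G_2)$ and $M[A_1]$ does not exclude this third minor, so the graphic case does not close. This is precisely the point of the discussion following the statement: the paper argues that the theorem as quoted from \cite{pirouz2011excluded} is missing a forbidden minor (stated there as $M(G_3)$) and carries a redundant one ($M[A_1]$). Consequently the paper does not prove Theorem~\ref{n} at all; it restates it with the corrected list $M(G_1)$, $M(G_2)$, $M(G_3)$ as Theorem~\ref{nproof} and proves that version by essentially the reduction you describe. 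Your proposal would be sound if aimed at the corrected statement, but as written it purports to establish a statement the paper itself identifies as incorrectly formulated, and the step that would have to fail is the appeal to Theorem~\ref{bsn1} in both directions for $M[A_1]$.
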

\begin{center} 
	
	$A_{1} = $\bordermatrix{ ~&~&~&~&~&~&~&~&~&~&~&~\cr
		~&1&0&0&0&0&1&1&0&0&1&0\cr
		~&0&1&0&0&0&1&1&1&0&0&0\cr
		~&0&0&1&0&0&0&1&1&1&0&0\cr
		~&0&0&0&1&0&0&0&1&1&1&0\cr   
		~&0&0&0&0&1&1&0&0&1&1&0\cr
		~&0&0&0&0&0&0&0&0&0&0&1}.
	
\end{center} 
\vskip.3cm
The class of graphic matroids is a subclass of the class of regular matroids. By Theorem \ref{bsn1}, $M(G_1)$, $M(G_2)$ and $M(G_3)$ are the forbidden-minors for the class of graphic matroids which yield cographic matroids under the splitting operation with repsect to a pair of elements. Therefore $M(G_3)$ is also the forbidden-minor for the class of regular matroids which yield cographic matroids under the splitting operation with respect to a pair of elements. However, $M(G_3)$ is missing in Theorem \ref{n}.  Further, the vector matroid $M[A_1]$ of $A_1$ contains a minor $R_{10}$ and $R_{10}$ contains $M(G_1)$ as a minor.  Therefore $M[A_1]$ is redundant in Theorem \ref{n} and so can be dropped.

In light of this discussion, we restate Theorem \ref{n} with appropriate modifications and prove it with a very short proof. 
\begin{theorem}  \label{nproof} Let $M$ be a regular matroid without containing a minor belonging to the class $\widetilde{N}$, where $N\in \{M(K_5), M(K_{3,3})\}$.  Then, $M_{x,y}$ is cographic for any $x,y \in E(M)$  if and only if $M$ has no minor isomorphic to any of the  matroids $M(G_1)$, $ M(G_2)$ and $M(G_3)$, where $G_1$,  $G_2$ and $G_3$  are the graphs as shown  in Figure 1.
\end{theorem}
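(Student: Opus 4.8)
The plan is to mirror the proof of Theorem~\ref{c2n1n} together with the regular-to-graphic theorem that opens this section: I reduce the general regular matroid to the graphic and cographic cases by Seymour's decomposition (Theorem~\ref{ro}), and then quote the already-established splitting characterizations in each case. For the forward implication I argue by contrapositive. Each of $M(G_1),M(G_2),M(G_3)$ is graphic, and, as recorded in the discussion preceding this theorem, these are precisely the forbidden minors supplied by Theorem~\ref{bsn1} for graphic matroids whose splittings are cographic. Since deletion and contraction of elements outside $\{x,y\}$ commute with the splitting operation — that is, $(M\backslash T_1/T_2)_{x,y}=M_{x,y}\backslash T_1/T_2$ whenever $x,y\notin T_1\cup T_2$, exactly the identity used in the proof of Lemma~\ref{c2pairgtocg} — and since cographicness is minor-closed, a minor of $M$ isomorphic to some $M(G_i)$ reproduces the non-cographic matroid $M(G_i)_{x,y}$ as a minor of $M_{x,y}$ for the corresponding pair. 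Hence $M_{x,y}$ fails to be cographic for some $x,y$, which is the forward direction.

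For the converse, assume $M$ is regular, has no minor in $\widetilde{N}$ for $N\in\{M(K_5),M(K_{3,3})\}$, and has no minor isomorphic to $M(G_1),M(G_2)$ or $M(G_3)$. By Theorem~\ref{ro}, $M$ is graphic, is cographic, or contains $R_{10}$ as a minor. The $R_{10}$ branch is eliminated at once: by the Observation above, $R_{10}$ has $M(G_1)$ as a minor, so an $R_{10}$-minor of $M$ would yield an $M(G_1)$-minor of $M$, contradicting the hypothesis. It remains to treat the two genuine cases. If $M$ is cographic, then since $M$ avoids $M(G_1)$ and $M(G_2)$, Theorem~\ref{c2bsd} gives that $M_{x,y}$ is cographic for every pair. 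If $M$ is graphic, then $M$ satisfies the hypotheses of Theorem~\ref{bsn1} (it avoids $\widetilde{N}$), and since it also avoids $M(G_1),M(G_2),M(G_3)$, that theorem gives that $M_{x,y}$ is cographic for every pair. Thus $M_{x,y}$ is cographic in all admissible cases, completing the equivalence.

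The work here is structural rather than computational: the whole argument rests on threading a general regular matroid through Theorem~\ref{ro} down to the graphic and cographic pieces, and on the bookkeeping that the standing hypotheses line up with the two black-box characterizations. The point demanding care is that the $\widetilde{N}$-avoidance hypothesis is needed \emph{only} in the graphic case, where it licenses Theorem~\ref{bsn1}, whereas the cographic case (Theorem~\ref{c2bsd}) is unconditional and requires only the absence of $M(G_1)$ and $M(G_2)$; one must also identify the third forbidden minor of the graphic characterization with $M(G_3)$, as fixed in the discussion above. The single new ingredient is the Observation that $M(G_1)$ is a minor of $R_{10}$: it is what closes the $R_{10}$ branch of the decomposition, and the same fact simultaneously explains the two corrections to the original Theorem~\ref{n} — namely that $M[A_1]$, containing $R_{10}$ and hence $M(G_1)$, is redundant, and that $M(G_3)$ must be restored to the forbidden list.
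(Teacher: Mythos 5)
Your proposal is correct and follows essentially the same route as the paper's own proof: both directions rest on Seymour's decomposition (Theorem~\ref{ro}), the Observation that $R_{10}$ has an $M(G_1)$-minor to kill the $R_{10}$ branch, and the black-box appeals to Theorem~\ref{bsn1} in the graphic case and Theorem~\ref{c2bsd} in the cographic case. Your forward direction is slightly more careful than the paper's (which simply cites Theorem~\ref{bsn1} even though $M$ is only assumed regular), since you make explicit the commutation of splitting with minors needed to lift non-cographicness from the $M(G_i)$-minor to $M$ itself; this is a welcome refinement but not a different argument.
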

\begin{proof} Suppose $M$ contains a minor isomorphic to one of the matroids $M(G_1)$,  $ M(G_2)$ and $M(G_3)$.	Then, by Theorem \ref{bsn1},  $M_{x,y}$ is not cographic for some pair $x, y$ of elements of $M$.

	Conversely, suppose  $M$ is a regular matroid without containing a minor isomorphic to any of the matroids $M(G_1)$, $ M(G_2)$ and $M(G_3)$.  By Theorem \ref{ro}, $M$ is graphic or cographic or contains $R_{10}$ as a minor. 	
	Since $R_{10}$ contains $M(G_1)$ as a minor, $M$ avoids $R_{10}$ as a minor. Therefore $M$ is graphic or cographic. 
	Suppose $M$ is a graphic matroid.  Then, by Theorem \ref{bsn1}, $M_{x,y}$ is cographic for any $x$ and $y$ of $E(M)$.  If $M$ is a cographic matroid, then, by Theorem \ref{c2bsd},  $M_{x,y}$ is cographic for any $x,y \in E(M)$. 
\end{proof}

\end{document}